\documentclass[12pt]{article}%
\usepackage{amsmath}
\usepackage{amsfonts}
\usepackage{amssymb}
\usepackage{graphicx}%
\setcounter{MaxMatrixCols}{30}
\providecommand{\U}[1]{\protect\rule{.1in}{.1in}}
\newtheorem{theorem}{Theorem}

\newtheorem{corollary}[theorem]{Corollary}

\newtheorem{definition}[theorem]{Definition}

\newtheorem{lemma}[theorem]{Lemma}

\newtheorem{proposition}[theorem]{Proposition}
\newtheorem{remark}[theorem]{Remark}

\newenvironment{proof}[1][Proof]{\noindent\textbf{#1.} }{\ \rule{0.5em}{0.5em}}
\begin{document}

\title{Strong completeness of $L_{p}$-type vector lattices}
\author{Youssef Azouzi\\{\small Research Laboratory of Algebra, Topology, Arithmetic, and Order}\\{\small Department of Mathematics}\\{\small Faculty of Mathematical, Physical and Natural Sciences of Tunis}\\{\small Tunis-El Manar University, 2092-El Manar, Tunisia}}
\date{}
\maketitle

\begin{abstract}
Let $E$ be a Dedekind complete Riesz space with weak unit $e$, equipped with a
conditional expectation operator $T$. We prove that the spaces $L^{p}\left(
T\right)  ,$ $p\in\left[  1,\infty\right]  $, with their natural vector-valued
norms, are strongly complete, extending the $p=2$ case of Kuo, Kalauch, and
Watson. This resolves a question that has remained open for several years. We
begin by studying a general type of convergence and its unbounded
modification, unifying and generalizing order, norm, and absolute weak
convergence while providing simpler proofs. As an application, we consider
vector-valued norms and their unbounded variants, generalizing strong
convergence in $L^{p}$-spaces and convergence in probability. This framework
establishes the completeness of $L^{p}\left(  T\right)  $ and of the universal
completion $E^{u}$, reinforcing the uo-completeness of universally complete
vector lattices. Finally we apply our main theorem to obtain a new result in
ergodicity for conditional preserving systems.

\end{abstract}

\section{Introduction}

The paper adresses a very active topic in these last years, that is, unbounded
convergence. This is a way to associate to some type of convergence in vector
lattices or in Banach lattices its unbounded modification 'see
\cite{L-999,L-310}). We pay a particular attention to certain types of
convergence defiened on a Dedekind complete vector lattice $E$ with weak order
unit, equipped with a conditional expectation operator $T.$ The operator $T$
can be thought as an abtract, allowing to develop a measure-free theory of
stochastic analysis. The exploration of $L^{p}$-type spaces has been
previously undertaken in \cite{L-180}, where several basic aspects have been
developed. However, a crucial and fundamental property of $L^{p}\left(
T\right)  $ spaces, namely, strong completeness, defined in terms of its
appropriate vector-valued norm, has yet to be thoroughly explored in earlier
works. Strong completeness means here that every strong Cauchy net is strongly
convergent. If this property occurs only for sequences, it is referred to as
strong sequential completeness. Moreover in such circumstances, it appears
that these two notions are not equivalent. While important advancements have
been made previously, this paper aims to make significant contributions and
brings achievements to the subject. This work marks the fourth installment in
the exploration of this direction. The initial paper \cite{L-360}, authored by
Kuo, Rodda and Watson, proved the strong sequential completeness of $L%
{{}^1}%
(T)$ and the straightforward full completeness of $L^{\infty}(T)$ (for nets).
It is important to note that while the latter proof is relatively easy, the
former relies on a clever and distinct idea, diverging significantly from the
classical approach. Traditional proofs for the classical case are not directly
transferable to establish the sequential completeness of $L%
{{}^1}%
(T)$ in the general setting of Riesz spaces. The second paper \cite{L-1111},
authored by the present writer, extends the scope to sequential completeness
for other values of $p$. The proof is based on an earlier result established
in \cite{L-360} and the concept of $T$-uniformity -- a notion that generalizes
uniform integrability to the setting of Riesz spaces. In the third step,
Kalauch, Kuo and Watson addressed the challenge of establishing strong
completeness of $L^{2}\left(  T\right)  $, employing a new technique based on
a Hahn-Jordan type Theorem from a separate work (see \cite{L-886}). Given the
absence of a direct extension of the proof from the classical case, innovative
methods are crucial for achieving the desired outcome. This current
contribution aims to complete the picture, focusing on proving the full
completeness of other cases. This achievement will be derived from the strong
completeness of $L%
{{}^2}%
(T)$ established in \cite{L-928} and a theorem that we proved asserting that,
for any $p\in(1,\infty),$ the strong completeness of $L^{p}(T)$ is equivalent
to the strong completeness of $L%
{{}^1}%
(T)$. Following an idea of the author used to show that any universal complete
vector lattice is unbounded order complete, we are able here to prove that
$L^{1}\left(  T\right)  $ is complete with respect to convergence in
$T$-conditional probability, that is a reasonable generalization of
convergence in probability from the classical theory. We apply our theorem to
prove a result in ergodicity for conditional preserving system.

The paper is organized as follows. Section 2 provides some notations and
premilinaries. Section 3 investigates general unbounded convergences,
presenting basic facts, refining some earlier results, and providing shorter
proofs. This section introduces key ideas that will be essential in Sections 4
and 5, which present the main results of the paper on completeness. In Section
4 we prove the strong completeness of $L^{p}\left(  T\right)  $ for all
$p\in\left[  1,\infty\right]  .$ In Section 5, we prove that the universal
completion $E^{u}$ of $E$ is complete with respect to convergence in
$T$-conditional probability. Our results are then applied to conditional
expectation--preserving systems, leading in particular to a characterization
of ergodicity. For recent developments on this topic, we refer the reader to
\cite{L-950}, \cite{L-1150} and \cite{L-1100}.

\section{Some preliminaries}

We begin by fixing notation and reviewing the key concepts used throughout the
paper. After giving a brief discussion on nets and subnets, we recall the
relevant types of convergence, with particular emphasis on those defined via
conditional expectation operators.

\textbf{Nets and subnets. }A net in a set $X$ is a map $x$ from a directed
preordered $A$ into $X.$ It is usually denoted by $\left(  x_{\alpha}\right)
_{\alpha\in A}.$ It is worth noting that there are several nonequivalent
definitions of subnets. Perhaps the most useful, and the one we adopt here
(see \cite{b-2800}), is the following: Let $\left(  x_{\alpha}\right)
_{\alpha\in A}$ be a net in a set $X.$ A subnet of $\left(  x_{\alpha}\right)
_{\alpha\in A}$ is a net $\left(  y_{\beta}\right)  _{\beta\in B}$ such that
$y_{\beta}=x_{\varphi\left(  \beta\right)  }$ for all $\beta\in B,$ where
$\varphi:B\longrightarrow A$ is a cofinal map--that is, for every $\alpha
_{0}\in A$ there is $\beta_{0}\in B$ such that $\varphi\left(  \beta\right)
\geq\alpha_{0}$ for all $\beta\geq\beta_{0}.$ It is important to note that the
map $\varphi$ is not assumed to be increasing.

\textbf{Different types of convergences. }We start be recalling the most
fundamental type of convergence studied in vector lattices. A net $\left(
x_{\alpha}\right)  _{\alpha\in A}$ in a a vector lattice $X$ is said to be
order convergent to $x$ if there exists a net $\left(  y_{\gamma}\right)
_{\gamma\in\Gamma}$ such that $y_{\gamma}\downarrow0$ and for each $\gamma$ in
$\Gamma$ there exists $\alpha_{0}\in A$ such that $\left\vert x_{\alpha
}-x\right\vert \leq y_{\gamma}$ for every $\alpha\geq\alpha_{0}.$ The net
$\left(  x_{\alpha}\right)  $ is said to be unbounded order convergent to $x$
if for every $u\in X_{+}$ we have $\left\vert x_{\alpha}-x\right\vert \wedge
u\overset{o}{\longrightarrow}0.$ Unbounded norm convergence is defined for
nets in a normed vector lattice in a similar way where the latter condition is
replaced by: $\left\vert x_{\alpha}-x\right\vert \wedge u\overset{\left\Vert
.\right\Vert }{\longrightarrow}0$. Two other essential types of convergence
will be investigated in the following sections. Before recalling their
definitions, we need to introduce some terminology built around the
fundamental concept of conditional expectation operators.

\textbf{Condition Riesz triple. }We say that $\left(  E,e,T\right)  $ is a
conditional Riesz triple if $E$ is a Dedeklind complete vector lattice, $e$ is
a weak order unit of $E$ and $T$ is a conditional expectation operator, that
is a prositive order continuous projection with $R\left(  T\right)  $ a
Dedekind complete Riesz subspace. We assume, moreover, that $T$ is strictly
positive, that is, $T\left\vert x\right\vert =0$ if and only if $x=0.$ This
allows us to define a vector-valued norm on $E$ by setting%
\[
\left\Vert x\right\Vert _{T,1}=T\left\vert x\right\vert ,\qquad x\in E.
\]
It is shown in \cite{L-24} that there exists a largest Riesz subspace of
$E^{u}$, known as the \textbf{natural domain} of $T,$ denoted by $L^{1}(T),$
to which $T$ extends uniquely to a conditional expectation. One of the major
properties of the space $L^{1}\left(  T\right)  $ lies in its $T$-universal
completeness, which means that if $\left(  x_{\alpha}\right)  $ is an
increasing net in $L^{1}\left(  T\right)  _{+}$ with supremum $x$ in $E^{u}$
such that $\sup Tx_{\alpha}\in L^{1}\left(  T\right)  ^{u},$ then $x\in
L^{1}\left(  T\right)  .$ Functional calculus are used in \cite{L-180} to
define $L^{p}\left(  T\right)  $-spaces for $p\in\left(  1,\infty\right)  $,
namely%
\[
L^{p}\left(  T\right)  =\left\{  x\in E^{u}:\left\vert x\right\vert ^{p}\in
L^{1}\left(  T\right)  \right\}  .
\]
These spaces are equipped with vector-valued norm given by%
\[
\left\Vert x\right\Vert _{T,p}=\left(  T\left\vert x\right\vert ^{p}\right)
^{1/p}.
\]
Generalization of H\"{o}lder Inequality, Minkowski Inequality, and Lypunov
Inequality have been obtained also (see \cite{L-180}). Next, we give two more
types of convergences that will be of great interest in our current work.

\begin{definition}
We say that a net $\left(  x_{\alpha}\right)  _{\alpha\in A}$ is strongly
convergent to $x$ in $L^{p}\left(  T\right)  $ if $\left\Vert x_{\alpha
}-x\right\Vert _{T,p}\overset{o}{\longrightarrow}0.$
\end{definition}

The convergence in $T$-conditional probability has been introduced in
\cite{L-14} and it is an abstraction of convergence in probability.

\begin{definition}
Let $\left(  x_{\alpha}\right)  $ be a net in $E$ and $x\in E.$ We say that
$\left(  x_{\alpha}\right)  $ converges to $x$ in $T$-conditional probability
if $TP_{\left(  \left\vert x_{\alpha}-x\right\vert -\varepsilon e\right)
^{+}}e\overset{o}{\longrightarrow}0.$
\end{definition}

We recall next a Lemma proved in \cite[Lemma 4]{L-900}. Its statement
signifies also that $T$-conditional probability convergence aligns precisely
with unbounded norm convergence relative to the norm $\left\Vert .\right\Vert
_{1,T}$ within the $R(T)$-vector space.

\begin{lemma}
\label{UI-B}\textit{Let} $\left(  E,e,T\right)  $ \textit{be a conditional
Riesz triple}. \textit{For a net} $(x_{\alpha})_{\alpha\in A}$ \textit{in}
$E$, \textit{the following statements are equivalent}.

\begin{enumerate}
\item[(i)] $x_{\alpha}\rightarrow x$ \textit{in }$T$\textit{-conditional
probability};

\item[(ii)] $T(|x_{\alpha}-x|\wedge u)\overset{o}{\longrightarrow}$
\textit{for every} $u\in E_{+};$

\item[(iii)] $T(|x_{\alpha}-x|\wedge e)\overset{o}{\longrightarrow}0$.
\end{enumerate}
\end{lemma}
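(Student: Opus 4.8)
The plan is to prove the cyclic chain of implications $(i)\Rightarrow(ii)\Rightarrow(iii)\Rightarrow(i)$, with the bulk of the work being the passage from $(iii)$ back to $(i)$; the implications $(i)\Rightarrow(ii)$ and $(ii)\Rightarrow(iii)$ should be comparatively routine. For $(ii)\Rightarrow(iii)$ there is nothing to do: take $u=e$. For $(i)\Rightarrow(ii)$, I would fix $u\in E_+$ and $\varepsilon>0$ and use the pointwise-style estimate
\[
|x_\alpha-x|\wedge u \;\le\; \varepsilon u \;+\; \big(|x_\alpha-x|-\varepsilon e\big)^+ \wedge u,
\]
which one checks on the band components where $|x_\alpha-x|\le\varepsilon e$ and where $|x_\alpha-x|>\varepsilon e$; combined with $\big(|x_\alpha-x|-\varepsilon e\big)^+\wedge u\le u\,P_{(|x_\alpha-x|-\varepsilon e)^+}$ (the component projection dominates) and the order continuity and positivity of $T$, this yields $T(|x_\alpha-x|\wedge u)\le \varepsilon Tu + T\big(u\,P_{(|x_\alpha-x|-\varepsilon e)^+}\big)$. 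Since $u\,P_{(|x_\alpha-x|-\varepsilon e)^+}\to 0$ in order when $x_\alpha\to x$ in $T$-conditional probability — here one wants the slightly stronger observation that $TP_{(|x_\alpha-x|-\varepsilon e)^+}e\to 0$ controls $T(u\,P_{(|x_\alpha-x|-\varepsilon e)^+})$ after first reducing to $u\le ne$ for some $n$, which is legitimate only when $e$ is a \emph{strong} unit; in general one must instead argue band-wise — one gets $\limsup_\alpha T(|x_\alpha-x|\wedge u)\le \varepsilon Tu$ in the order sense, and letting $\varepsilon\downarrow 0$ finishes it.

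The main obstacle is $(iii)\Rightarrow(i)$: from control of $T(|x_\alpha-x|\wedge e)$ we must recover control of $TP_{(|x_\alpha-x|-\varepsilon e)^+}e$ for \emph{every} fixed $\varepsilon>0$. The key inequality to establish is a Chebyshev–Markov type bound in the vector-lattice setting, namely
\[
\varepsilon\, P_{(|x_\alpha-x|-\varepsilon e)^+}\,e \;\le\; |x_\alpha-x|\wedge e
\qquad\text{whenever } 0<\varepsilon\le 1 .
\]
To see this, work on the band $B_\alpha$ generated by $(|x_\alpha-x|-\varepsilon e)^+$: on $B_\alpha$ we have $|x_\alpha-x|>\varepsilon e$, hence $|x_\alpha-x|\wedge e \ge (\varepsilon e)\wedge e = \varepsilon e$ (using $\varepsilon\le 1$), so multiplying the band projection of $e$ by $\varepsilon$ stays below $|x_\alpha-x|\wedge e$; off $B_\alpha$ the left side is $0$. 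Applying the positive operator $T$ gives $\varepsilon\, TP_{(|x_\alpha-x|-\varepsilon e)^+}e \le T(|x_\alpha-x|\wedge e)\overset{o}{\longrightarrow}0$, and since $\varepsilon>0$ is a fixed scalar this forces $TP_{(|x_\alpha-x|-\varepsilon e)^+}e\overset{o}{\longrightarrow}0$, i.e. $x_\alpha\to x$ in $T$-conditional probability. For $\varepsilon>1$ one simply notes $P_{(|x_\alpha-x|-\varepsilon e)^+}\le P_{(|x_\alpha-x|-e)^+}$, reducing to the case $\varepsilon=1$ already handled.

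Throughout, the ambient facts I would lean on are: $T$ is positive and order continuous, so it preserves the order limits; band projections $P_A$ are lattice homomorphisms with $P_A y \le y$ for $y\ge 0$; and the decomposition of any $y\ge 0$ relative to the band generated by $(y-\varepsilon e)^+$ and its complement. No completeness of $L^p(T)$ or functional calculus is needed — only the structure of the conditional Riesz triple. I expect the write-up of $(i)\Rightarrow(ii)$ to require the most care, precisely because one cannot assume $e$ is a strong unit and must therefore phrase the domination $T(u P_{A_\alpha})\to 0$ through an order-limit argument on principal bands rather than a crude scalar bound; the $(iii)\Rightarrow(i)$ step, once the Chebyshev inequality above is in hand, is short.
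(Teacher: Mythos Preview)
The paper does not actually prove this lemma; it merely cites \cite[Lemma~4]{L-900}. So there is no in-paper argument to compare against, and your proposal has to be judged on its own. Your $(ii)\Rightarrow(iii)$ is trivial and your $(iii)\Rightarrow(i)$ Chebyshev step is correct and cleanly written: the inequality $\varepsilon\,P_{(|x_\alpha-x|-\varepsilon e)^+}e\le |x_\alpha-x|\wedge e$ for $0<\varepsilon\le1$ holds exactly for the band-splitting reason you give, and positivity of $T$ finishes it.

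The problems are all in $(i)\Rightarrow(ii)$. First, the displayed inequality
\[
|x_\alpha-x|\wedge u\;\le\;\varepsilon u+\big(|x_\alpha-x|-\varepsilon e\big)^+\wedge u
\]
is \emph{false} as written: in $\mathbb{R}$ take $e=1$, $u=0.01$, $\varepsilon=0.1$, $|x_\alpha-x|=0.05$; the left side is $0.01$ while the right side is $0.001$. The estimate that does hold on the two bands is $|x_\alpha-x|\wedge u\le (\varepsilon e)\wedge u + P_{(|x_\alpha-x|-\varepsilon e)^+}u$, so the first term should involve $\varepsilon e$ (or $\varepsilon e\wedge u$), not $\varepsilon u$. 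Second, and more seriously, you never actually close the gap you flag: from $TP_\alpha e\overset{o}{\to}0$ you need $T(P_\alpha u)\overset{o}{\to}0$ for an \emph{arbitrary} $u\in E_+$, and ``argue band-wise'' is not an argument. A clean fix is the weak-unit trick
\[
P_\alpha u\;\le\;(u-u\wedge ne)+P_\alpha(u\wedge ne)\;\le\;(u-u\wedge ne)+n\,P_\alpha e,
\]
whence $T(P_\alpha u)\le T(u-u\wedge ne)+n\,TP_\alpha e$; the second term $\overset{o}{\to}0$ for fixed $n$ by $(i)$, and the first term $\downarrow0$ as $n\to\infty$ by order continuity of $T$ since $u\wedge ne\uparrow u$. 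This is exactly the two-parameter squeeze encoded in the paper's property~$(\ast)$. Alternatively, you can bypass the difficulty entirely by routing through $(iii)$: prove $(i)\Rightarrow(iii)$ via $|x_\alpha-x|\wedge e\le\varepsilon e+P_{(|x_\alpha-x|-\varepsilon e)^+}e$, and then $(iii)\Rightarrow(ii)$ via $|x_\alpha-x|\wedge u\le(u-u\wedge ne)+n\big(|x_\alpha-x|\wedge e\big)$, which avoids band projections altogether.
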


As convergence in probability theory is defined for non necessarily integrable
random variables, it is appropriate to extend the definition of convergence in
$T$-conditional probability to elements of $E^{u},.$the universal completion
of $E.$ Note also that Lemma \ref{UI-B} holds true for nets in $E^{u}$ without
any modification to its proof.

\section{General unbounded convergence}

We consider in this section some type of convergence $\theta,$ then consider
its unbounded modification and prove how several fundamental results can be
derived in unified and simplified way. Related approaches have been
investigated by other authors (see for example \cite{L-999}).

We consider throughout this section an Archimedean vector lattice $X$ equipped
with a convergence $\theta$ satisfying the following conditions:

\begin{enumerate}
\item[(1)] If $x_{\alpha}=x$ for every $\alpha$ then $x_{\alpha}%
\longrightarrow x;$

\item[(2)] If $x_{a}\overset{\theta}{\longrightarrow}x~$and $y_{\alpha
}\overset{\theta}{\longrightarrow}y$ then $x_{\alpha}+\lambda y_{\alpha
}\overset{\theta}{\longrightarrow}x+\lambda y$ and $tx\overset{\theta
}{\longrightarrow}0$ as $t\longrightarrow0;$

\item[(3)] If $x_{\alpha}\overset{\theta}{\longrightarrow}0$ and $\left\vert
y_{\alpha}\right\vert \leq\left\vert x_{\alpha}\right\vert $ then $y_{\alpha
}\overset{\theta}{\longrightarrow}0;$

\item[(4)] If $x_{\alpha}\overset{\theta}{\longrightarrow}x$ then $y_{\beta
}\overset{\theta}{\longrightarrow}x$ for every subnet $\left(  y_{\beta
}\right)  $ of $\left(  x_{\alpha}\right)  .$
\end{enumerate}

Convergences fulfilling the above conditions include topological convergence
within a locally convex-solid topology, order convergence and more generally
$p$-convergence for vector-valued norm $p.$ Convergences studied here exhibit
compatibility with the underlying order structure.

\begin{lemma}
\label{L0}Let $X$ be an Archimedean vector lattice equipped with a convergence
$\theta$ and consider two nets $\left(  x_{\alpha}\right)  _{\alpha\in A}$ and
$\left(  y_{\alpha}\right)  _{\alpha\in A}$ in $X.$ If $x_{\alpha}%
\overset{\theta}{\longrightarrow}x$ and $y_{\alpha}\overset{\theta
}{\longrightarrow}y$ then $x_{\alpha}\vee y_{\alpha}\overset{\theta
}{\longrightarrow}x\vee y$ and $x_{\alpha}\wedge y_{\alpha}\overset{\theta
}{\longrightarrow}x\wedge y.$
\end{lemma}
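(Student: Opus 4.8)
The plan is to reduce everything to the claim that $x_\alpha \vee y_\alpha \overset{\theta}{\longrightarrow} x\vee y$, since the statement for $\wedge$ then follows from the identity $x_\alpha\wedge y_\alpha = -\bigl((-x_\alpha)\vee(-y_\alpha)\bigr)$ together with the linearity axiom (2) (which gives $-x_\alpha\overset{\theta}{\longrightarrow}-x$ and $-y_\alpha\overset{\theta}{\longrightarrow}-y$, and closure under negation of the limit). So I would state that reduction first, then concentrate on the join.

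For the join, the key is the elementary Riesz-space inequality
\[
\bigl|\,x_\alpha\vee y_\alpha - x\vee y\,\bigr| \le |x_\alpha - x| + |y_\alpha - y|,
\]
which holds in any vector lattice (it follows from $|a\vee b - a'\vee b'|\le |a-a'|+|b-b'|$, itself a consequence of $a\vee b = \tfrac12(a+b+|a-b|)$ and the triangle inequality, or directly from the Birkhoff inequality $|a\vee c - b\vee c|\le |a-b|$). Given this, set $z_\alpha := |x_\alpha - x| + |y_\alpha - y|$. By axiom (2), $x_\alpha - x \overset{\theta}{\longrightarrow} 0$ and $y_\alpha - y\overset{\theta}{\longrightarrow}0$; I then need that $|x_\alpha - x|\overset{\theta}{\longrightarrow}0$ and $|y_\alpha-y|\overset{\theta}{\longrightarrow}0$. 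This is where axiom (3) enters: from $w_\alpha\overset{\theta}{\longrightarrow}0$ and $\bigl||w_\alpha|\bigr| = |w_\alpha| \le |w_\alpha|$ we get $|w_\alpha|\overset{\theta}{\longrightarrow}0$. Then axiom (2) again gives $z_\alpha\overset{\theta}{\longrightarrow}0$, and finally axiom (3) applied to $0\le |x_\alpha\vee y_\alpha - x\vee y| \le z_\alpha$ yields $x_\alpha\vee y_\alpha - x\vee y\overset{\theta}{\longrightarrow}0$, hence $x_\alpha\vee y_\alpha\overset{\theta}{\longrightarrow}x\vee y$ by (2) once more.

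I do not anticipate a serious obstacle here; the proof is a short chain of applications of the four axioms, and the only slightly delicate point is making sure each invocation of axiom (3) is applied to a net tending to $0$ (not to a general limit) — which is why I pass through the differences $x_\alpha - x$ rather than working with $x_\alpha$ directly. One should also be mindful that axiom (2) as stated bundles together additive compatibility and the separate scalar condition $tx\overset{\theta}{\longrightarrow}0$; only the additive part (and stability under multiplication by the fixed scalars $1$ and $\lambda=1$, or $\lambda=-1$ for the $\wedge$ reduction) is needed. If one wanted to avoid even citing the join formula, the Birkhoff inequality $\bigl|x_\alpha\vee y_\alpha - x\vee y\bigr| \le |x_\alpha - x| + |y_\alpha - y|$ can be proved inline in one line, so the whole argument remains self-contained.
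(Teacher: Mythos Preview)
Your proof is correct and follows essentially the same route as the paper: both hinge on the inequality $|x_\alpha\vee y_\alpha - x\vee y|\le |x_\alpha-x|+|y_\alpha-y|$ for the join, and then reduce the meet to the join via a lattice identity (you use $a\wedge b = -((-a)\vee(-b))$, while the paper uses $a\wedge b = a+b-a\vee b$, which is an immaterial variation). Your write-up is in fact more explicit than the paper's about how axioms (2) and (3) are invoked, which is fine.
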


\begin{proof}
This can be deduced easily from the following two facts:

(i ) $\left\vert x_{\alpha}\vee y_{\alpha}-x\vee y\right\vert \leq\left\vert
x_{\alpha}-x\right\vert +\left\vert y_{\alpha}-y\right\vert $

(ii) $x_{\alpha}\wedge y_{\alpha}=x_{\alpha}+y_{\alpha}-x_{\alpha}\vee
y_{\alpha}.$
\end{proof}

Given a subset $Y$ of $X,$ the $\theta$-closure of $Y$ in $X,$ denoted by
$\overline{Y}^{\theta},$ is the set of all $x\in X$ such that there is a net
$\left(  x_{\alpha}\right)  _{\alpha\in A}$ in $Y$ such that $x_{\alpha
}\overset{\theta}{\longrightarrow}x.$ We say that $Y$ is $\theta$-closed if
$Y=\overline{Y}^{\theta}.$ We say that $Y$ is $\theta$-dense if $X=\overline
{Y}^{\theta}.$ In the case where $\theta$ is the order convergence, it is
typical for a sublattice $Y$ of $X$ to be order dense if for every $x>0$ there
is $y\in Y$ such that $0<y\leq x.$ Our definition, however, deviates from this
conventional definition and aligns with what is refereed to as density with
respect to order convergence (see \cite{L-65}). It was shwon that every order
dense sublattice is dense with respect to order convergence, but the converse
is not necessarily true (see \cite[Example 2.6]{L-65}).

Given any type of convergence $\theta$ satisfying conditions (1)-(4) above, we
can define the unbounded $\theta$-convergence (denoted $u\theta$-convergence)
as follows.

\begin{definition}
\label{Y4-A}Let $\left(  X,\theta\right)  $ be as above. A net $\left(
x_{\alpha}\right)  _{\alpha\in A}$ in $X$ is $u\theta$-convergent to $x$ if%
\begin{equation}
\left(  x_{\alpha}\vee b\right)  \wedge c\overset{\theta}{\longrightarrow
}\left(  x\vee b\right)  \wedge c, \label{C2}%
\end{equation}
for all $b,c\in X$ with $b\leq c.$
\end{definition}

\begin{remark}
\label{Y4-R}Note that if $a,b,c\in X$ with $b\leq c$ then we have%
\begin{align*}
\left(  a\vee b\right)  \wedge c  &  =\left(  a\wedge c\right)  \vee\left(
b\wedge c\right)  =\left(  a\wedge c\right)  \vee b;\\
\left(  \left(  -a\right)  \vee b\right)  \wedge c  &  =-\left[  \left(
a\vee\left(  -c\right)  \right)  \wedge\left(  -b\right)  \right]  .
\end{align*}
Thus $x_{\alpha}\overset{u\theta}{\longrightarrow}x$ if and only if $\left(
x_{\alpha}\wedge c\right)  \vee b\overset{\theta}{\longrightarrow}\left(
x\wedge c\right)  \vee b$ for all $b,c\in X$ with $b\leq c$ and that
$x_{\alpha}\overset{u\theta}{\longrightarrow}x$ if and only if $-x_{\alpha
}\overset{u\theta}{\longrightarrow}-x.$
\end{remark}

The next lemma, while straightforward, proves highly practical in application.

\begin{lemma}
\label{L1}The net $\left(  x_{\alpha}\right)  _{\alpha\in A}$ is $u\theta
$-convergent to $x.$ if and only if condition (\ref{C2}) holds for every
$b\leq0\leq c.$ If, in addition, $\left(  x_{\alpha}\right)  _{\alpha\in A}$
is a positive net then the $u\theta$-convergence is reduced to the following%
\begin{equation}
x_{\alpha}\wedge a\overset{\theta}{\longrightarrow}x\wedge a\text{ for all
}a\in X_{+}. \label{C3}%
\end{equation}

\end{lemma}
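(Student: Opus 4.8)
The forward implications in both assertions are immediate: $u\theta$-convergence is \emph{defined} by demanding (\ref{C2}) for all pairs $b\le c$, and this includes the pairs $b\le 0\le c$. So the content lies in the two converses, and both will be obtained by purely lattice-algebraic reductions followed by Lemma \ref{L0}.

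For the first claim the plan is to promote (\ref{C2}) from the clamps $b\le 0\le c$ to arbitrary $b\le c$. The device is the identity, valid in any vector lattice (no order completeness or even the Archimedean property is needed): whenever $b_{0}\le b\le c\le c_{0}$, one has for every $a\in X$
\[
\bigl(\bigl((a\vee b_{0})\wedge c_{0}\bigr)\vee b\bigr)\wedge c=(a\vee b)\wedge c,
\]
which follows from two applications of the distributive law together with $b_{0}\vee b=b$, $c_{0}\vee b=c_{0}$ and $c_{0}\wedge c=c$. Given an arbitrary pair $b\le c$, I would then set $b_{0}=b\wedge 0$ and $c_{0}=c\vee 0$, so that $b_{0}\le 0\le c_{0}$ and $b_{0}\le b\le c\le c_{0}$. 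The hypothesis gives $(x_{\alpha}\vee b_{0})\wedge c_{0}\overset{\theta}{\longrightarrow}(x\vee b_{0})\wedge c_{0}$; applying Lemma \ref{L0} first to take the supremum with the constant net $b$ and then the infimum with the constant net $c$ (both constant nets being $\theta$-convergent by condition (1)), and then reading the displayed identity off on each side, yields $(x_{\alpha}\vee b)\wedge c\overset{\theta}{\longrightarrow}(x\vee b)\wedge c$. Hence (\ref{C2}) holds for every $b\le c$, i.e. $x_{\alpha}\overset{u\theta}{\longrightarrow}x$.

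For the second claim, assume $(x_{\alpha})$ is positive. For the direction $x_{\alpha}\overset{u\theta}{\longrightarrow}x\Rightarrow(\ref{C3})$, fix $a\in X_{+}$ and invoke the first claim with the particular clamp $b=-x^{-}$, $c=a$ (a legitimate pair since $-x^{-}=x\wedge 0\le 0\le a$). Because $x_{\alpha}\ge 0\ge -x^{-}$ we get $x_{\alpha}\vee(-x^{-})=x_{\alpha}$, and because $-x^{-}=x\wedge 0\le x$ we get $x\vee(-x^{-})=x$; thus (\ref{C2}) collapses exactly to $x_{\alpha}\wedge a\overset{\theta}{\longrightarrow}x\wedge a$. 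Conversely, assuming (\ref{C3}), by the first claim it suffices to verify (\ref{C2}) for $b\le 0\le c$. Since $c\in X_{+}$, (\ref{C3}) gives $x_{\alpha}\wedge c\overset{\theta}{\longrightarrow}x\wedge c$; now $x_{\alpha}\wedge c\ge 0\ge b$, so $x_{\alpha}\wedge c=(x_{\alpha}\wedge c)\vee b$, while Lemma \ref{L0} (applied to $x_{\alpha}\wedge c\overset{\theta}{\longrightarrow}x\wedge c$ and the constant net $b$) gives $(x_{\alpha}\wedge c)\vee b\overset{\theta}{\longrightarrow}(x\wedge c)\vee b$. Using $(x_{\alpha}\vee b)\wedge c=(x_{\alpha}\wedge c)\vee b$ and $(x\vee b)\wedge c=(x\wedge c)\vee b$ from Remark \ref{Y4-R}, this reads precisely $(x_{\alpha}\vee b)\wedge c\overset{\theta}{\longrightarrow}(x\vee b)\wedge c$.

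The only step that calls for a moment's care is isolating the correct lattice identity in the first claim and checking that it needs no completeness hypothesis; everything else is routine bookkeeping with conditions (1)--(4) and Lemma \ref{L0}. I would also point out that the converse in the second claim is arranged so as \emph{not} to require uniqueness of $\theta$-limits: rather than comparing two putative limits of the net $(x_{\alpha}\wedge c)$, the trivial identity $x_{\alpha}\wedge c=(x_{\alpha}\wedge c)\vee b$ coming from positivity lets one manufacture the target limit $(x\vee b)\wedge c$ directly.
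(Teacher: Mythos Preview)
Your argument is correct and follows essentially the same route as the paper's. For the first assertion, both proofs enlarge the clamp $[b,c]$ to a window straddling $0$ and then use the lattice identity together with Lemma~\ref{L0}; the paper takes $b_{0}=-|b|$ and $c_{0}=|c|$ where you take $b_{0}=b\wedge 0$ and $c_{0}=c\vee 0$, which is a cosmetic difference only.

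For the second assertion the paper is extremely terse, writing only that the positive case ``is now obvious because $(x_{\alpha}\vee b)\wedge c=x_{\alpha}\wedge c$ for every $b\le 0\le c$''. Read literally this collapses the \emph{left} side of (\ref{C2}) but not the right side, since nothing in the hypotheses forces $x\ge 0$, and hence $(x\vee b)\wedge c$ need not equal $x\wedge c$. Your treatment is more careful on exactly this point: in the forward direction you select the specific clamp $b=-x^{-}$ so that $x\vee b=x$, and in the converse you pass through Lemma~\ref{L0} and Remark~\ref{Y4-R} to manufacture the target $(x\vee b)\wedge c$ directly rather than relying on uniqueness of $\theta$-limits. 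This is a genuine (if minor) improvement in rigor over the paper's presentation, not a different method.
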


\begin{proof}
Assume that Condition (C2) holds for every $b\leq0\leq c.$ Now let $b,c\in X$
with $b\leq c.$ Then%
\begin{align*}
\left(  x_{\alpha}\vee b\right)  \wedge c  &  =\left(  x_{\alpha}\vee
b\vee\left(  -\left\vert b\right\vert \right)  \right)  \wedge c\wedge
\left\vert c\right\vert \\
&  =\left(  \left[  \left(  x_{\alpha}\vee\left(  -\left\vert b\right\vert
\right)  \right)  \wedge\left\vert c\right\vert \right]  \vee b\right)  \wedge
c,
\end{align*}
and it follows from the assumption and Lemma \ref{L0} that%
\[
\left(  x_{\alpha}\vee b\right)  \wedge c\overset{\theta}{\longrightarrow
}\left(  \left[  \left(  x\vee\left(  -\left\vert b\right\vert \right)
\right)  \wedge\left\vert c\right\vert \right]  \vee b\right)  \wedge
c=\left(  x\vee b\right)  \wedge c.
\]
The case of positive net $\left(  x_{\alpha}\right)  _{\alpha\in A}$ is now
obvious because we have%
\[
\left(  x_{\alpha}\vee b\right)  \wedge c=x_{\alpha}\wedge c
\]
for every $b\leq0\leq c\in X.$
\end{proof}

\begin{theorem}
\label{T1}Let $\left(  X,\theta\right)  $ be as above and let $\left(
x_{\alpha}\right)  _{\alpha\in A}$ be a net in $X.$ Then the following hold.

\begin{enumerate}
\item[(i)] If $x_{\alpha}\overset{u\theta}{\longrightarrow}x$ and $y_{\alpha
}\overset{u\theta}{\longrightarrow}y$ then $x_{\alpha}\vee y_{\alpha}%
\overset{u\theta}{\longrightarrow}x\vee y$ and $x_{\alpha}\wedge y_{\alpha
}\overset{u\theta}{\longrightarrow}x_{\alpha}\wedge y_{\alpha}$

\item[(ii)] If $x_{\alpha}\overset{u\theta}{\longrightarrow}x$ and $y_{\alpha
}\overset{u\theta}{\longrightarrow}y$ and $\lambda\in\mathbb{R}$ then $\lambda
x_{\alpha}+y_{\alpha}\overset{u\theta}{\longrightarrow}\lambda x+y,$

\item[(iii)] We have $x_{\alpha}\overset{u\theta}{\longrightarrow
}x\Leftrightarrow x_{\alpha}-x\overset{u\theta}{\longrightarrow}%
0\Leftrightarrow\left\vert x_{\alpha}-x\right\vert \overset{u\theta
}{\longrightarrow}0.$

\item[(iv)] $x_{\alpha}\overset{u\theta}{\longrightarrow}x\Leftrightarrow
\left\vert x_{\alpha}-x\right\vert \wedge u\overset{\theta}{\longrightarrow}0$
for all $u\in X_{+}.$
\end{enumerate}
\end{theorem}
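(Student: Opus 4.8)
The plan is to establish the four items in the order (iii), (iv), (i), (ii), because (iii) is the structural key: it reduces every assertion to the behaviour of \emph{positive} nets converging to $0$, where Lemma \ref{L1} supplies the clean criterion \eqref{C3}.

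For (iii), the equivalence $x_{\alpha}\overset{u\theta}{\longrightarrow}x\Leftrightarrow x_{\alpha}-x\overset{u\theta}{\longrightarrow}0$ is a bookkeeping argument: replacing the pair $(b,c)$ by $(b+x,c+x)$ turns $\bigl((x_{\alpha}-x)\vee b\bigr)\wedge c$ into $\bigl(x_{\alpha}\vee(b+x)\bigr)\wedge(c+x)-x$, so the two families of nets appearing in \eqref{C2} differ only by the constant net $x$, which condition~(2) absorbs; Lemma~\ref{L1} lets me restrict attention to $b\le 0\le c$ on one side, while on the other side $u\theta$-convergence delivers \eqref{C2} for \emph{all} $b\le c$, so the shifted parameters are available. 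For $x_{\alpha}-x\overset{u\theta}{\longrightarrow}0\Leftrightarrow|x_{\alpha}-x|\overset{u\theta}{\longrightarrow}0$, put $z_{\alpha}=x_{\alpha}-x$. If $|z_{\alpha}|\overset{u\theta}{\longrightarrow}0$ then, $|z_{\alpha}|$ being positive, $|z_{\alpha}|\wedge a\overset{\theta}{\longrightarrow}0$ for every $a\in X_{+}$ by \eqref{C3}; for $b\le 0\le c$ one has the order estimate $\bigl|(z_{\alpha}\vee b)\wedge c\bigr|\le |z_{\alpha}|\wedge(|b|+|c|)$, so condition~(3) gives $(z_{\alpha}\vee b)\wedge c\overset{\theta}{\longrightarrow}0$ and Lemma~\ref{L1} yields $z_{\alpha}\overset{u\theta}{\longrightarrow}0$. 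Conversely, if $z_{\alpha}\overset{u\theta}{\longrightarrow}0$ then $z_{\alpha}^{+}\wedge a=(z_{\alpha}\vee 0)\wedge a\overset{\theta}{\longrightarrow}0$ directly from \eqref{C2}, while $-z_{\alpha}\overset{u\theta}{\longrightarrow}0$ by Remark~\ref{Y4-R} gives $z_{\alpha}^{-}\wedge a\overset{\theta}{\longrightarrow}0$, whence $|z_{\alpha}|\wedge a=(z_{\alpha}^{+}\wedge a)\vee(z_{\alpha}^{-}\wedge a)\overset{\theta}{\longrightarrow}0$ by Lemma~\ref{L0}, and \eqref{C3} gives $|z_{\alpha}|\overset{u\theta}{\longrightarrow}0$.

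Item (iv) is then immediate: by (iii), $x_{\alpha}\overset{u\theta}{\longrightarrow}x$ is equivalent to $|x_{\alpha}-x|\overset{u\theta}{\longrightarrow}0$, and since $|x_{\alpha}-x|$ is a positive net Lemma~\ref{L1} identifies this with $|x_{\alpha}-x|\wedge u\overset{\theta}{\longrightarrow}0$ for all $u\in X_{+}$. With (iv) in hand, (i) and (ii) follow from the elementary Riesz-space inequalities $|x_{\alpha}\vee y_{\alpha}-x\vee y|\le|x_{\alpha}-x|+|y_{\alpha}-y|$ (and its analogue for $\wedge$, and $|\lambda x_{\alpha}+y_{\alpha}-\lambda x-y|\le|\lambda|\,|x_{\alpha}-x|+|y_{\alpha}-y|$) together with the subadditivity $(p+q)\wedge u\le p\wedge u+q\wedge u$ for $p,q,u\in X_{+}$: each summand on the right $\theta$-converges to $0$ by (iv) — the fixed scalar $|\lambda|$ being handled by the scaling clause of condition~(2) — their sum does so by condition~(2), condition~(3) transfers this to the dominated left-hand side, and a final appeal to (iv) closes the argument. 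Alternatively, (i) can be obtained directly from the distributive identity $(x_{\alpha}\vee y_{\alpha}\vee b)\wedge c=\bigl((x_{\alpha}\vee b)\wedge c\bigr)\vee\bigl((y_{\alpha}\vee b)\wedge c\bigr)$ and Lemma~\ref{L0}, reducing the $\wedge$ case to $\vee$ by negation via Remark~\ref{Y4-R}.

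The step I expect to be the real obstacle is the additivity in (ii): the expression $\bigl((x_{\alpha}+y_{\alpha})\vee b\bigr)\wedge c$ does not decompose into a function of $x_{\alpha}$ alone and a function of $y_{\alpha}$ alone, so one cannot argue straight from Definition~\ref{Y4-A}; it genuinely requires distilling the criterion (iv) first. Consequently the substance of the proof lies in securing (iii)–(iv) cleanly, and the technical points to watch are the two order-theoretic estimates used there, namely $\bigl|(z_{\alpha}\vee b)\wedge c\bigr|\le|z_{\alpha}|\wedge(|b|+|c|)$ for $b\le 0\le c$ and the subadditivity of $\,\cdot\wedge u$ on the positive cone.
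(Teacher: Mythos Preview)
Your proposal is correct and uses the same toolkit as the paper --- the translation identity $((z+y)\vee b)\wedge c=\bigl(z\vee(b-y)\bigr)\wedge(c-y)+y$, Lemma~\ref{L1} for positive nets, Remark~\ref{Y4-R}, Lemma~\ref{L0}, and the domination step for (ii) --- but you organise the argument differently. The paper proves (i) \emph{first}, directly from distributivity and Lemma~\ref{L0} (your ``alternative'' route), and then leverages (i) together with the translation identity to obtain the chain (iii)--(iv); only then is (ii) deduced from (iv). You instead go straight to (iii)--(iv), bypassing (i) by reading $z_{\alpha}^{+}\wedge a=(z_{\alpha}\vee 0)\wedge a$ off the definition itself rather than as a special case of (i), and supplying the explicit bound $\bigl|(z\vee b)\wedge c\bigr|\le|z|\wedge(|b|+|c|)$ for the implication $|z_{\alpha}|\overset{u\theta}{\to}0\Rightarrow z_{\alpha}\overset{u\theta}{\to}0$, a step the paper passes over with the phrase ``using again Lemma~\ref{L1}''. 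Your ordering has the virtue of isolating (iv) as the working criterion before anything else, so that (i) and (ii) become one-line consequences; the paper's ordering has the virtue that (i) is obtained without any auxiliary inequality. Both the estimate $\bigl|(z\vee b)\wedge c\bigr|\le|z|\wedge(|b|+|c|)$ (which follows from $|(z\vee b)\wedge c|=z^{+}\wedge c+z^{-}\wedge|b|$) and the subadditivity $(p+q)\wedge u\le p\wedge u+q\wedge u$ on $X_{+}$ are valid, so no gap remains.
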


The property given in (iv) is used in \cite{L-197}, \cite{L-173} and
\cite{L-354} to define uo-convergence, un-convergence and uaw-convergence
respectively. These convergences exihibit numerous shared properties, amenable
to a unified proof. Moreover, as the characterization of $u\theta$-convergence
given in Lemma \ref{L1}, specially for positive nets can be used to simplify
several proofs of known results. We will illustrate this through examples.
Even more, this characterization can be used to improve some results by
relaxing their assumptions (see for example Theorem \ref{T12} below).

\begin{proof}
(i) Assume that $x_{\alpha}\overset{u\theta}{\longrightarrow}x$ and
$y_{\alpha}\overset{u\theta}{\longrightarrow}y$ then for every $b\leq c$ we
have%
\begin{align*}
\left(  \left(  x_{\alpha}\vee y_{\beta}\right)  \vee b\right)  \wedge c &
=\left(  \left(  x_{\alpha}\vee b\right)  \wedge c\right)  \vee\left(
y_{\alpha}\vee b\wedge c\right)  \\
&  \overset{\theta}{\longrightarrow}\left(  \left(  x\vee b\right)  \wedge
c\right)  \vee\left(  y\vee b\wedge c\right)  \\
&  =\left(  \left(  \left(  x\vee y\right)  \vee b\right)  \wedge c\right)  ,
\end{align*}
which shows that $x_{\alpha}\vee y_{\alpha}\overset{u\theta}{\longrightarrow
}x\vee y.$ Similarly we show that $\left(  x_{\alpha}\wedge y_{\alpha}\right)
\overset{u\theta}{\longrightarrow}x\wedge y.$ Thus (i) is proved.

Assume that $x_{a}\overset{u\theta}{\longrightarrow}0.$ It follows from (i)
and Remark \ref{Y4-R} that $x_{\alpha}^{+}=x_{\alpha}\vee0\overset{u\theta
}{\longrightarrow}0$ and $x_{\alpha}^{-}=\left(  -x_{\alpha}\right)
\vee0\overset{u\theta}{\longrightarrow}0.$ Hence for every $u\in X_{+},$
\[
\left\vert x_{\alpha}\right\vert \wedge u=x_{\alpha}^{+}\wedge u+x_{\alpha
}^{-}\wedge u\overset{\theta}{\longrightarrow}0.
\]
Thus $\left\vert x_{\alpha}\right\vert \overset{u\theta}{\longrightarrow}0$ by
Lemma \ref{L1}.

Now if $x_{\alpha}\overset{u\theta}{\longrightarrow}x$ and $y\in X$ then
$x_{\alpha}+y\overset{u\theta}{\longrightarrow}x+y.$ This can be deduce from
the following identity:%
\[
\left(  \left(  x_{\alpha}+y\right)  \vee c\right)  \wedge b=\left(  \left(
x_{\alpha}\right)  \vee\left(  b-y\right)  \right)  \wedge\left(  c-y\right)
+y.
\]
In view of the preceding observation and using again Lemma \ref{L1} we get%
\[
x_{\alpha}\overset{u\theta}{\longrightarrow}x\Longleftrightarrow x_{\alpha
}-x\overset{u\theta}{\longrightarrow}0\Longleftrightarrow\left\vert x_{\alpha
}-x\right\vert \overset{u\theta}{\longrightarrow}0\Longleftrightarrow
\left\vert x_{\alpha}-x\right\vert \wedge u\overset{\theta}{\longrightarrow}0
\]
for all $u\in X_{+}.$It remains only to prove (ii). Assume that $x_{\alpha
}\overset{u\theta}{\longrightarrow}x$ and $y_{\alpha}\overset{u\theta
}{\longrightarrow}y$ let $\lambda\in\mathbb{R}$ and let $u\in X_{+}.$ Then
\[
\left\vert x_{\alpha}+\lambda y_{\alpha}-x-\lambda y\right\vert \wedge
u\leq\left\vert x_{\alpha}-x\right\vert \wedge u+\left\vert \lambda\right\vert
\left\vert y_{\alpha}-y\right\vert \wedge u\overset{\theta}{\longrightarrow
}0.
\]
Henc $x_{\alpha}+\lambda y_{\alpha}\overset{u\theta}{\longrightarrow}x+\lambda
y.$ This completes the proof.
\end{proof}

\begin{remark}
\label{R1}As it was noted in Lemma \ref{L1}, $u\theta$-convergence can be
reduced for positive nets $\left(  x_{\alpha}\right)  $ to the fact that
$x_{\alpha}\wedge u\overset{\theta}{\longrightarrow}x\wedge u$ for every $u.$
This is also valid for increasing nets: if $\left(  x_{\alpha}\right)  $ is an
increasing net then $x_{\alpha}\overset{u\theta}{\longrightarrow}x$ if and
only if $x_{\alpha}\wedge u\overset{\theta}{\longrightarrow}x\wedge u$ for all
$u\geq0.$ Indeed, if $x_{\alpha}\overset{u\theta}{\longrightarrow}x$ then for
$\alpha\geq\alpha_{0},$ and $b=-\left\vert u\right\vert -\left\vert
x_{\alpha_{0}}\right\vert -\left\vert x\right\vert $ we have%
\[
x_{\alpha}\wedge u=\left(  x_{\alpha}\vee b\right)  \wedge u\overset{u\theta
}{\longrightarrow}\left(  x\vee b\right)  \wedge u=x\wedge u.
\]
Conversely, if $x_{\alpha}\wedge u\overset{\theta}{\longrightarrow}x\wedge u$
for every $u$ then%
\[
\left(  x_{\alpha}-x_{\alpha_{0}}\right)  \wedge\left(  u-x_{\alpha_{0}%
}\right)  =x_{\alpha}\wedge u-x_{\alpha_{0}}\overset{\theta}{\longrightarrow
}x\wedge u-x_{\alpha_{0}}=\left(  x_{\alpha}-x_{\alpha_{0}}\right)
\wedge\left(  u-x_{\alpha_{0}}\right)  ,
\]
which implies that $\left(  x_{\alpha}-x_{\alpha_{0}}\right)  _{\alpha
\geq\alpha_{0}}$ $u\theta$-converges to $x-x_{\alpha_{0}}$ by Lemma \ref{L1}
and then $x_{\alpha}\overset{u\theta}{\longrightarrow}x$ by Theorem \ref{T1}.
\end{remark}

The following Lemma can be viewed as a generalization of \cite[Lemma
1.6]{L-421} and \cite[Lemma 1.2]{L-171}.

\begin{lemma}
Let $X$ be vector lattice and let $\theta$ be a convergence in $X$ for which
the limit is unique.

\begin{enumerate}
\item[(i)] If $x_{\alpha}\uparrow$ and $x_{\alpha}\overset{\theta
}{\longrightarrow}x$ then $x_{\alpha}\uparrow x.$

\item[(ii)] If $x_{\alpha}\uparrow$ and $x_{\alpha}\overset{u\theta
}{\longrightarrow}x$ then $x_{\alpha}\uparrow x$ and then $x_{\alpha}%
\overset{\theta}{\longrightarrow}x.$
\end{enumerate}
\end{lemma}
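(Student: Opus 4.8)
The plan is to prove the two statements in turn, relying on the characterizations of $u\theta$-convergence established in Lemma \ref{L1}, Theorem \ref{T1}, and Remark \ref{R1}, together with the uniqueness of the $\theta$-limit.

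\textbf{Part (i).} Suppose $x_\alpha\uparrow$ and $x_\alpha\overset{\theta}{\longrightarrow}x$. First I would show $x$ is an upper bound: fix an index $\alpha_0$. For all $\alpha\geq\alpha_0$ we have $x_\alpha\geq x_{\alpha_0}$, hence $x_\alpha\vee x_{\alpha_0}=x_\alpha\overset{\theta}{\longrightarrow}x$. On the other hand, the constant net $x_{\alpha_0}$ $\theta$-converges to $x_{\alpha_0}$ by (1), and applying Lemma \ref{L0} to the nets $(x_\alpha)_{\alpha\geq\alpha_0}$ and the constant net we get $x_\alpha\vee x_{\alpha_0}\overset{\theta}{\longrightarrow}x\vee x_{\alpha_0}$; but the tail $(x_\alpha)_{\alpha\geq\alpha_0}$ is a subnet of the original net, so its $\theta$-limit is still $x$ by (4). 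Uniqueness of the limit forces $x\vee x_{\alpha_0}=x$, i.e.\ $x_{\alpha_0}\leq x$. Since $\alpha_0$ was arbitrary, $x$ bounds the net. Next I would show $x$ is the least upper bound: let $y$ be any upper bound, so $x_\alpha\leq y$ for all $\alpha$, hence $x_\alpha\wedge y=x_\alpha\overset{\theta}{\longrightarrow}x$. Again by Lemma \ref{L0} with the constant net $y$, $x_\alpha\wedge y\overset{\theta}{\longrightarrow}x\wedge y$, so uniqueness gives $x\wedge y=x$, i.e.\ $x\leq y$. Therefore $x_\alpha\uparrow x$.

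\textbf{Part (ii).} Suppose $x_\alpha\uparrow$ and $x_\alpha\overset{u\theta}{\longrightarrow}x$. By Remark \ref{R1}, for an increasing net $u\theta$-convergence is equivalent to $x_\alpha\wedge u\overset{\theta}{\longrightarrow}x\wedge u$ for all $u\geq0$. I would like to reduce to Part (i) applied to a suitable bounded, increasing, $\theta$-convergent net. Fix $\alpha_0$ and set $u=(x_{\alpha_0})^{+}$ shifted appropriately; more cleanly, replace the net by $(x_\alpha-x_{\alpha_0})_{\alpha\geq\alpha_0}$, which is increasing and positive and, by Theorem \ref{T1}(ii), $u\theta$-converges to $x-x_{\alpha_0}$. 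So without loss of generality assume the net is positive and increasing with $x_\alpha\overset{u\theta}{\longrightarrow}x$; then by Lemma \ref{L1}, $x_\alpha\wedge a\overset{\theta}{\longrightarrow}x\wedge a$ for every $a\in X_+$. For fixed $a$, the net $(x_\alpha\wedge a)_\alpha$ is increasing and $\theta$-convergent to $x\wedge a$, so Part (i) gives $x_\alpha\wedge a\uparrow x\wedge a$. Now I claim $x_\alpha\uparrow x$: $x$ is an upper bound since for each $\beta$, taking $a=x_\beta$ yields $x_\beta=x_\beta\wedge x_\beta\leq x\wedge x_\beta\leq x$; and if $y$ is any upper bound then $x_\alpha\wedge a\leq y$ for every $\alpha$, so $x\wedge a=\sup_\alpha(x_\alpha\wedge a)\leq y$ for all $a\in X_+$, and letting $a$ range over all of $X_+$ (in particular $a\geq x$) gives $x\leq y$. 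Hence $x_\alpha\uparrow x$, and then for any upper bound argument one concludes $x_\alpha-x\uparrow 0$ so $|x_\alpha-x|=x-x_\alpha\downarrow 0$; applying $u\theta$-convergence with $u=x-x_{\alpha_0}\geq|x_\alpha-x|$ for $\alpha\geq\alpha_0$ (a large enough bound), $|x_\alpha-x|=|x_\alpha-x|\wedge u\overset{\theta}{\longrightarrow}0$, which is exactly $x_\alpha\overset{\theta}{\longrightarrow}x$ by Theorem \ref{T1}(iii).

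\textbf{Main obstacle.} The delicate point is the reduction step in (ii): one must be careful that "$\sup$" computed pointwise along the family $a\in X_+$ genuinely reconstructs the supremum of $(x_\alpha)$, i.e.\ that $x\wedge a\uparrow_a x$ and that this interacts correctly with $x_\alpha\wedge a\uparrow_\alpha x\wedge a$ to yield $x_\alpha\uparrow_\alpha x$ — a double-limit interchange that needs the order-theoretic identities rather than any topological continuity. Handling the shift to a positive net and choosing the dominating $u$ for the final $\theta$-convergence step are routine once this is set up; everything else is a direct application of the quoted lemmas and uniqueness of the $\theta$-limit.
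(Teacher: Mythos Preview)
Your proof is correct and follows essentially the same approach as the paper. Part (i) is identical in spirit to the paper's argument (you use $x_\alpha\vee x_{\alpha_0}$ where the paper uses $x_\alpha\wedge x_\beta$, which is symmetric), and your Part (ii) simply spells out in detail what the paper compresses into the single line ``Using Remark \ref{R1} the proof is similar to (i)'': the reduction to a positive increasing net, the application of Lemma \ref{L1}/Remark \ref{R1} to get $x_\alpha\wedge a\overset{\theta}{\to}x\wedge a$, the invocation of Part (i) on each truncated net, and finally the observation that once $x_\alpha\uparrow x$ the net $|x_\alpha-x|$ is order bounded by $x-x_{\alpha_0}\in X_+$, so $u\theta$-convergence with this choice of $u$ yields $\theta$-convergence.
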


\begin{proof}
(i) Let $\beta$ be a fixed element in $A$ and consider the net $\left(
x_{\alpha}\wedge x_{\beta}\right)  _{\alpha\in A}.$ By Remark \ref{R1} we have
$x_{\alpha}\wedge x_{\beta}\overset{\theta}{\longrightarrow}x\wedge x_{\beta}$
and as $x_{\alpha}\wedge x_{\beta}=x_{\beta}$ for all $\alpha\geq\beta$ we
deduce that $x\geq x_{\beta}.$ Now if $z$ is an upper bound of $\left\{
x_{\alpha}\right\}  $ then $x_{\alpha}=x_{\alpha}\wedge z\overset{\theta
}{\longrightarrow}x\wedge z$ and then $x=x\wedge z\leq z.$

(ii) Using Remark \ref{R1} the proof is similar to (i).
\end{proof}

We say that $e$ is a $\theta$-unit if $x\wedge ne\overset{\theta
}{\longrightarrow}x$ for every $x\in X_{+}.$ When $\theta$ represents order
convergence, $e$ becomes a weak order unit, and when $\theta$ represents the
norm convergence in a Banach lattice, then $e$ serves as a quasi-interior
point, also known as a topological unit.

In the sequel, we assume that $\theta$ adheres to the following property:%

\begin{align}
\text{If }0  &  \leq x_{\alpha}\leq u_{\beta}+a\left(  \beta,\alpha\right)
\text{ with }u_{\beta}\overset{\theta}{\longrightarrow}0\text{ and }%
\tag{(*)}\\
&  \text{for each }\beta\text{, }a\left(  \beta,\alpha\right)  \underset
{\alpha}{\longrightarrow}0\text{ then }x_{\alpha}\overset{\theta
}{\longrightarrow}0.\nonumber
\end{align}

Property $\left(  \ast\right)  $ is readily satisfied for order convergence,
norm convergence in normed Riesz spaces, and strong convergence in
$L^{p}\left(  T\right)  .$

\begin{theorem}
Let $X$ be a vector lattice equipped with a convergence $\theta$ satisfying
condition $\left(  \ast\right)  .$ The following properties are equivalent.

\begin{enumerate}
\item[(i)] $e$ is a $\theta$-unit;

\item[(ii)] for every net $\left(  x_{\alpha}\right)  _{\alpha\in A}$ in $X$
we have $x_{\alpha}\overset{u\theta}{\longrightarrow}0$ if and only if
$\left\vert x_{\alpha}\right\vert \wedge e\overset{\theta}{\longrightarrow}0;$

\item[(iii)] for every sequence $\left(  x_{n}\right)  _{n\in\mathbb{N}}$ in
$X$ we have $x_{n}\overset{u\theta}{\longrightarrow}0$ if and only if
$\left\vert x_{n}\right\vert \wedge e\overset{\theta}{\longrightarrow}0.$
\end{enumerate}
\end{theorem}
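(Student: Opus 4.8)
The plan is to establish the cycle of implications $(i)\Rightarrow(ii)\Rightarrow(iii)\Rightarrow(i)$, with Theorem~\ref{T1}(iv) as the constant tool: recall that $x_\alpha\overset{u\theta}{\longrightarrow}0$ is equivalent to $|x_\alpha|\wedge u\overset{\theta}{\longrightarrow}0$ for every $u\in X_+$. In particular the implication ``$x_\alpha\overset{u\theta}{\longrightarrow}0\Rightarrow|x_\alpha|\wedge e\overset{\theta}{\longrightarrow}0$'' is free (take $u=e$), so in (ii) and (iii) the entire content sits in the reverse implication.

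For $(i)\Rightarrow(ii)$ I would take a net $(x_\alpha)$ with $|x_\alpha|\wedge e\overset{\theta}{\longrightarrow}0$, fix $u\in X_+$, and estimate $|x_\alpha|\wedge u$. Splitting $u=u\wedge ne+(u-ne)^+$ and using $a\wedge(b+c)\le a\wedge b+c$ for $c\ge0$ together with $|x_\alpha|\wedge ne\le n(|x_\alpha|\wedge e)$, one obtains
\[
|x_\alpha|\wedge u\le n\,(|x_\alpha|\wedge e)+(u-ne)^+ .
\]
Because $e$ is a $\theta$-unit, $(u-ne)^+=u-u\wedge ne\overset{\theta}{\longrightarrow}0$ as $n\to\infty$, and for each fixed $n$ the net $\alpha\mapsto n(|x_\alpha|\wedge e)$ converges $\theta$ to $0$ by hypothesis and condition (2). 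This is exactly the situation addressed by condition $(\ast)$, with the auxiliary net taken to be $\bigl((u-ne)^+\bigr)_{n\in\mathbb{N}}$, so $(\ast)$ yields $|x_\alpha|\wedge u\overset{\theta}{\longrightarrow}0$; since $u$ was arbitrary, $x_\alpha\overset{u\theta}{\longrightarrow}0$ by Theorem~\ref{T1}(iv).

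$(ii)\Rightarrow(iii)$ is trivial, since a sequence is a net. For $(iii)\Rightarrow(i)$, fix $x\in X_+$ and set $y_n=(x-ne)^+$. The one nontrivial computation is the Riesz-space inequality $n\bigl((x-ne)^+\wedge e\bigr)\le x\wedge ne$: indeed $(x-ne)^+$ is disjoint from $(x-ne)^-=ne-x\wedge ne$, hence so is $n(x-ne)^+$, and writing $ne=(x-ne)^-+x\wedge ne$ and applying $a\wedge(b+c)\le a\wedge b+c$ gives $n\bigl((x-ne)^+\wedge e\bigr)=\bigl(n(x-ne)^+\bigr)\wedge ne\le x\wedge ne$. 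Consequently $y_n\wedge e\le\tfrac1n x\overset{\theta}{\longrightarrow}0$, so by (iii) we get $y_n\overset{u\theta}{\longrightarrow}0$. Applying Theorem~\ref{T1}(iv) with $u=x$, and noting $y_n\le x$ so that $y_n\wedge x=y_n$, we conclude $y_n\overset{\theta}{\longrightarrow}0$, i.e. $x\wedge ne=x-y_n\overset{\theta}{\longrightarrow}x$, which is precisely the assertion that $e$ is a $\theta$-unit.

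I expect the only genuinely non-routine ingredients to be the disjointness argument giving $n\bigl((x-ne)^+\wedge e\bigr)\le x\wedge ne$, and the small but useful observation in $(iii)\Rightarrow(i)$ that, since $(x-ne)^+$ is order bounded above by the fixed element $x$, its $u\theta$-convergence to $0$ upgrades automatically to $\theta$-convergence (meet with $u=x$), bypassing any monotone-net argument; everything else is routine manipulation with conditions (1)--(4), condition $(\ast)$, and Theorem~\ref{T1}(iv).
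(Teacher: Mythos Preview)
Your proof is correct and follows essentially the same route as the paper's: the same estimate in $(i)\Rightarrow(ii)$ (you split $u=u\wedge ne+(u-ne)^+$, the paper uses the Birkhoff inequality $x_\alpha\wedge u-x_\alpha\wedge u\wedge ne\le u-u\wedge ne$, but both feed the same bound into condition~$(\ast)$), and the same key inequality $n\bigl((x-ne)^+\wedge e\bigr)\le x$ in $(iii)\Rightarrow(i)$, obtained via the disjointness of $(x-ne)^{\pm}$. Your explicit final step---observing that $y_n\le x$ so that $y_n\overset{u\theta}{\longrightarrow}0$ upgrades to $y_n=y_n\wedge x\overset{\theta}{\longrightarrow}0$---is in fact a point the paper's write-up glosses over (it states $x_n\le n^{-1}x$ where only $x_n\wedge e\le n^{-1}x$ has actually been established), so your version is the more careful of the two.
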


\begin{proof}
(i) $\Longrightarrow$ (ii) Let $\left(  x_{\alpha}\right)  _{\alpha\in A}$ be
a net in $X_{+}$ such that $x_{\alpha}\wedge e\overset{\theta}{\longrightarrow
}0$ and let $u\in X_{+}.$ Then
\begin{align*}
x_{\alpha}\wedge u  &  =x_{\alpha}\wedge u-x_{\alpha}\wedge u\wedge
ne+x_{\alpha}\wedge u\wedge ne\\
&  \leq u-u\wedge ne+x_{\alpha}\wedge ne.
\end{align*}
As $\theta$ satisfies condition $\left(  \ast\right)  $ we conclude that
$x_{\alpha}\wedge u\overset{\theta}{\longrightarrow}0$.

(ii) $\Longrightarrow$ (iii) is trivial.

(iii) $\Longrightarrow$ (i). Let $x$ be an element in $X_{+}.$ We have to show
that $x_{n}:=x-x\wedge ne\overset{u\mathfrak{\theta}}{\longrightarrow}0.$
According to (iii) it is enough to show that $x_{n}\wedge e\overset{\theta
}{\longrightarrow}0$. Denote $u=x-ne$ and observe that%
\begin{align}
nx_{n}  &  =nu^{+}\wedge ne=nu^{+}\wedge\left(  x-u\right) \nonumber\\
&  =-u^{+}+\left(  n+1\right)  u^{+}\wedge x+u^{-}\theta\nonumber\\
&  =-u^{+}+\left(  n+1\right)  u^{+}\wedge x\leq x.
\end{align}
Thus, we have shown that $0\leq x_{n}\leq n^{-1}x,$ which implies that
$x_{n}\overset{\theta}{\longrightarrow}0$ and completes the proof.
\end{proof}

\begin{remark}
It is noteworthy that the equivalence between (i) and (ii) has so far been
been established only in the context of order convergence and norm
convergence. Regarding un-convergence and uaw-convergence, our results not
only improve the existing findings but also provide a notably concise and
straighforward proof (see, for comparison, the proof of \cite[Theorem
3.1]{L-171}).
\end{remark}

One can expect an improvement of Lemma \ref{L1} in the presence of a $\theta
$-unit by examining only the convergence of $x_{a}\wedge e$ to $x\wedge e.$
This is completely false even in $\mathbb{R}$ by considering constant nets.
Nevertheless, we have the following refinement.

\begin{lemma}
Let $X$ be vector lattice equipped with a convergence $\theta$ satisfying
property $\left(  \ast\right)  $ and assume that $e$ is a $\theta$-unit. Then
the following conditions are equivalent for a positive net in $X.$

\begin{enumerate}
\item[(i)] $\left(  x_{\alpha}\right)  _{\alpha\in A}$ is $u\theta$-convergent
to $x;$

\item[(ii)] $x_{\alpha}\wedge ke\overset{\theta}{\longrightarrow}x\wedge ke$
for all $k\in\mathbb{N}.$
\end{enumerate}
\end{lemma}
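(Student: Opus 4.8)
The plan is to prove the equivalence of (i) and (ii) directly, with (i)$\Rightarrow$(ii) being essentially immediate from Lemma \ref{L1} and the real work lying in (ii)$\Rightarrow$(i). Indeed, if $\left(x_{\alpha}\right)$ is a positive net that is $u\theta$-convergent to $x$, then applying the characterization of Lemma \ref{L1} with $a=ke\in X_{+}$ gives $x_{\alpha}\wedge ke\overset{\theta}{\longrightarrow}x\wedge ke$ for every $k\in\mathbb{N}$, which is exactly (ii). So it remains to upgrade convergence along the countable family $\{ke:k\in\mathbb{N}\}$ to convergence of $x_{\alpha}\wedge a$ for \emph{every} $a\in X_{+}$, using that $e$ is a $\theta$-unit.

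For (ii)$\Rightarrow$(i), fix an arbitrary $a\in X_{+}$; by Lemma \ref{L1} it suffices to show $x_{\alpha}\wedge a\overset{\theta}{\longrightarrow}x\wedge a$. Since $a\wedge ke\uparrow$ and $e$ is a $\theta$-unit, $a\wedge ke\overset{\theta}{\longrightarrow}a$. The idea is to interpose $ke$ and exploit property $(\ast)$ exactly as in the proof of the previous theorem. Write, for each $k$,
\[
x_{\alpha}\wedge a = \left(x_{\alpha}\wedge a - x_{\alpha}\wedge a\wedge ke\right) + x_{\alpha}\wedge a\wedge ke \le \left(a - a\wedge ke\right) + x_{\alpha}\wedge ke,
\]
and similarly bound $x\wedge a$ in terms of $x\wedge ke$. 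Subtracting and taking absolute values, one estimates $\left\vert x_{\alpha}\wedge a - x\wedge a\right\vert$ above by a term of the form $u_{k} + \left\vert x_{\alpha}\wedge ke - x\wedge ke\right\vert$ where $u_{k}:=2\left(a-a\wedge ke\right)\overset{\theta}{\longrightarrow}0$ (as $k\to\infty$, by the $\theta$-unit property) and, for each fixed $k$, the second term $\theta$-converges to $0$ in $\alpha$ by hypothesis (ii). Invoking property $(\ast)$ — with the roles of its two indices played by $k$ and $\alpha$ — yields $x_{\alpha}\wedge a\overset{\theta}{\longrightarrow}x\wedge a$, and hence $x_{\alpha}\overset{u\theta}{\longrightarrow}x$ by Lemma \ref{L1}.

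The step I expect to require the most care is producing the clean domination needed to feed property $(\ast)$: one must verify that $\left\vert x_{\alpha}\wedge a - x\wedge a\right\vert \le \left\vert x_{\alpha}\wedge ke - x\wedge ke\right\vert + 2(a - a\wedge ke)$ (or a comparable inequality). This follows from the elementary identity $y\wedge a = y\wedge ke + y\wedge a - y\wedge a\wedge ke$ together with $0\le y\wedge a - y\wedge a\wedge ke \le a - a\wedge ke$ for positive $y$, applied to $y=x_{\alpha}$ and $y=x$ and combined via the triangle inequality; the positivity of the net is what makes these lattice manipulations valid. Once this inequality is in hand, the monotone term $2(a-a\wedge ke)$ plays the role of "$u_{\beta}$" and the per-$k$ convergence plays the role of "$a(\beta,\alpha)$" in $(\ast)$, and the conclusion is automatic. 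No other subtlety arises: conditions (1)–(4) on $\theta$ are used only implicitly (through Lemmas \ref{L0} and \ref{L1}), and uniqueness of $\theta$-limits is not needed for this statement.
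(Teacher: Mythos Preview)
Your proof is correct and follows essentially the same route as the paper: both reduce (ii)$\Rightarrow$(i) to the estimate $|x_\alpha\wedge a - x\wedge a|\le 2(a - a\wedge ke) + |x_\alpha\wedge ke - x\wedge ke|$ and then invoke property~$(\ast)$ with $k$ and $\alpha$ playing the two index roles. One small slip to clean up: the ``elementary identity'' $y\wedge a = y\wedge ke + y\wedge a - y\wedge a\wedge ke$ you quote is not an identity (it would force $y\wedge ke\le a$); the paper obtains the needed bound via the three-term triangle split $|x_\alpha\wedge a - x\wedge a|\le |x_\alpha\wedge a - x_\alpha\wedge a\wedge ke| + |x_\alpha\wedge a\wedge ke - x\wedge a\wedge ke| + |x\wedge a\wedge ke - x\wedge a|$ and Birkhoff's inequality on the middle term, which is the cleanest way to justify your displayed inequality.
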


\begin{proof}
We only need to prove the implication (ii) $\Longrightarrow$ (i). Let $u\in
X_{+}$ and consider that inequality%
\begin{align*}
\left\vert x_{\alpha}\wedge u-x\wedge u\right\vert  &  \leq\left\vert
x_{\alpha}\wedge u-x_{\alpha}\wedge u\wedge k\right\vert \\
&  +\left\vert x_{\alpha}\wedge u\wedge k-x\wedge u\wedge k\right\vert
+\left\vert x\wedge u-x\wedge u\wedge k\right\vert .
\end{align*}
Using Birkhoff Inequality yields%
\[
\left\vert x_{\alpha}\wedge u-x\wedge u\right\vert \leq2\left\vert u-u\wedge
ke\right\vert +\left\vert x_{\alpha}\wedge ke-x\wedge ke\right\vert .
\]
It follows from (ii) that $\left\vert x_{\alpha}\wedge ke-x\wedge
ke\right\vert \overset{\theta}{\longrightarrow}0$ and from the fact that $e$
is a $\theta$-unit that $\left\vert u-u\wedge ke\right\vert \overset{\theta
}{\longrightarrow}0.$ Thus $\left\vert x_{\alpha}\wedge u-x\wedge u\right\vert
\overset{\theta}{\longrightarrow}0$ and then $x_{\alpha}\overset
{\mathfrak{u\theta}}{\longrightarrow}x$ by Lemma \ref{L1}.
\end{proof}

Assume that $Y$ is a sublattice of $X$ and $X$ is equipped with a convergence
$\theta.$ Then $\theta$ induces a natural convergence on $Y.$ However, when
considering $u\theta$-convergence in $Y,$ two distinct convergences emerge:
one induced by $X,$ and the other induced by $Y,$ as outlined in Definition
\ref{Y4-A}, where $b$ and $c$ belong to $Y$. it is important to note that
$u\theta$-convergence within$Y$ does not necessarily imply $u\theta
$-convergence within $X$ in general. Neverthekless, there exist signification
instances when this implication holds true. The following result extends
\cite[Theorem 4.3]{L-171}, \cite[Theorem 3]{L-358}, and \cite[Lemma 3]{L-310}
and provides a unified proof.

\begin{proposition}
\label{UP}\textit{Let }$X$ be a vector lattice equipped with a convergence
$\theta$ and let $Y$ \textit{be a sublattice of }$X\mathit{.}$ We assume that
$\theta$ is a convergence satisfying property $\left(  \ast\right)  .$
\textit{Let }$\left(  y_{\alpha}\right)  $ be \textit{a net in} $Y$
\textit{such that} $y_{\alpha}\overset{u\theta}{\longrightarrow}0$ \textit{in
}$Y$\textit{. Each of the following conditions implies that} $y_{\alpha
}\overset{u\theta}{\longrightarrow}0$ \textit{in} $X$.

\begin{enumerate}
\item[\textbf{(i)}] $Y$ \textit{is majorizing in} $X$;

\item[\textbf{(ii)}] $Y$ \textit{is} $\theta$-\textit{dense in} $X$;

\item[\textbf{(iii)}] $Y$ \textit{is a projection band in} $X$.
\end{enumerate}
\end{proposition}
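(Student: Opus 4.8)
The plan is to reduce the whole statement to the characterization of $u\theta$-convergence in Theorem \ref{T1}(iv). Since the restriction of $\theta$ to the sublattice $Y$ still satisfies (1)--(4), that characterization also applies inside $Y$, so the hypothesis $y_{\alpha}\overset{u\theta}{\longrightarrow}0$ in $Y$ is precisely the statement that $\left\vert y_{\alpha}\right\vert \wedge v\overset{\theta}{\longrightarrow}0$ for every $v\in Y_{+}$, while the conclusion $y_{\alpha}\overset{u\theta}{\longrightarrow}0$ in $X$ is precisely that $\left\vert y_{\alpha}\right\vert \wedge u\overset{\theta}{\longrightarrow}0$ for every $u\in X_{+}$. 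So I would fix $u\in X_{+}$ once and for all and, in each of the three cases, pass from $u$ to suitable elements of $Y_{+}$ to which the hypothesis can be applied.

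Cases (i) and (iii) should be routine and need neither property $(\ast)$ nor anything beyond the axioms on $\theta$. If $Y$ is majorizing in $X$, choose $v\in Y$ with $u\leq v$; then $v\in Y_{+}$ and $0\leq\left\vert y_{\alpha}\right\vert \wedge u\leq\left\vert y_{\alpha}\right\vert \wedge v\overset{\theta}{\longrightarrow}0$, so solidity of $\theta$, i.e.\ condition~(3), gives $\left\vert y_{\alpha}\right\vert \wedge u\overset{\theta}{\longrightarrow}0$. If $Y$ is a projection band, write $u=u_{1}+u_{2}$ with $u_{1}$ the component of $u$ in $Y$ (so $u_{1}\in Y_{+}$) and $u_{2}\geq0$ the component in the complementary band; then $u_{2}$ is disjoint from $\left\vert y_{\alpha}\right\vert \in Y$, so $\left\vert y_{\alpha}\right\vert \wedge u_{2}=0$, and since $u_{1},u_{2}$ are disjoint positive elements $u=u_{1}\vee u_{2}$, whence $\left\vert y_{\alpha}\right\vert \wedge u=\left(\left\vert y_{\alpha}\right\vert \wedge u_{1}\right)\vee\left(\left\vert y_{\alpha}\right\vert \wedge u_{2}\right)=\left\vert y_{\alpha}\right\vert \wedge u_{1}\overset{\theta}{\longrightarrow}0$ because $u_{1}\in Y_{+}$.

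The substantive case is (ii), and the only obstacle I anticipate is organizing the double limit for property $(\ast)$. By $\theta$-density pick a net $(v_{\gamma})_{\gamma\in\Gamma}$ in $Y$ with $v_{\gamma}\overset{\theta}{\longrightarrow}u$; replacing $v_{\gamma}$ by $v_{\gamma}^{+}\in Y_{+}$ and using Lemma \ref{L0} (together with $u=u^{+}$) I may assume $v_{\gamma}\in Y_{+}$, and then condition~(2) together with Lemma \ref{L0} gives $\left\vert u-v_{\gamma}\right\vert \overset{\theta}{\longrightarrow}0$. Birkhoff's inequality yields, for all $\gamma$ and $\alpha$,
\[
0\leq\left\vert y_{\alpha}\right\vert \wedge u\leq\left\vert y_{\alpha}\right\vert \wedge v_{\gamma}+\left\vert u-v_{\gamma}\right\vert ,
\]
and, for each fixed $\gamma$, $\left\vert y_{\alpha}\right\vert \wedge v_{\gamma}\overset{\theta}{\longrightarrow}0$ along $\alpha$ since $v_{\gamma}\in Y_{+}$. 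I would then invoke property $(\ast)$ with outer index $\beta=\gamma$, the $\theta$-null net $u_{\gamma}:=\left\vert u-v_{\gamma}\right\vert$, and the double-indexed term $a(\gamma,\alpha):=\left\vert y_{\alpha}\right\vert \wedge v_{\gamma}$, to conclude $\left\vert y_{\alpha}\right\vert \wedge u\overset{\theta}{\longrightarrow}0$. The delicate point is exactly this bookkeeping: the outer parameter $\gamma$ must absorb the approximation error $\left\vert u-v_{\gamma}\right\vert$ uniformly in $\alpha$, while for each frozen $\gamma$ the truncation $\left\vert y_{\alpha}\right\vert \wedge v_{\gamma}$ still vanishes along $\alpha$; once this is set up, $(\ast)$ does the rest.
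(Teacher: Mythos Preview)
Your argument is correct and follows essentially the same route as the paper's own proof: reduce to showing $\left\vert y_{\alpha}\right\vert\wedge u\overset{\theta}{\longrightarrow}0$ for $u\in X_{+}$, handle (i) by majorization and (iii) via the band decomposition $u=u_{1}+u_{2}$ with $\left\vert y_{\alpha}\right\vert\wedge u_{2}=0$, and for (ii) use Birkhoff's inequality $\left\vert y_{\alpha}\right\vert\wedge u\leq\left\vert u-v_{\gamma}\right\vert+\left\vert y_{\alpha}\right\vert\wedge v_{\gamma}$ together with property $(\ast)$. You are in fact slightly more careful than the paper in justifying why the approximating net in (ii) may be taken in $Y_{+}$ (via Lemma~\ref{L0} and $v_{\gamma}\mapsto v_{\gamma}^{+}$), and your observation that (i) and (iii) need only solidity of $\theta$, not property $(\ast)$, is correct.
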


\begin{proof}
We can assume without loss of generality that $y_{\alpha}\geq0$ for every
$\alpha$. (i) is trivial. To prove (ii), take $u\in X_{+}$ and let $\left(
z_{\beta}\right)  _{\beta\in B}$ be a net in $Y_{+}$ such that $z_{\beta
}\overset{\theta}{\longrightarrow}u.$ It follows from Birkhoff Inequality that%
\[
y_{\alpha}\wedge u=y_{\alpha}\wedge u-y_{\alpha}\wedge z_{\beta}+y_{\alpha
}\wedge z_{\beta}\leq\left\vert u-z_{\beta}\right\vert +y_{\alpha}\wedge
z_{\beta}.
\]
Now since $\theta$ satisfies Property $\left(  \ast\right)  $ it follows that
$y_{\alpha}\wedge u\overset{\theta}{\longrightarrow}0.$

(iii) Every $u\in X_{+}$ has the decomposition $u=u^{Y}+u^{Y^{d}}$ for some
$u^{Y}\in Y_{+}$ and $u^{Y^{d}}\in Y_{+}^{d}$. It follows from \cite[Theorem
6.5]{b-1089} that $y_{\alpha}\wedge u=y_{\alpha}\wedge v$ and then $y_{\alpha
}\wedge u\overset{\theta}{\longrightarrow}0.$
\end{proof}

\begin{remark}
For $uo$-convergence, the picture is clear thank to \cite[Theorem 3.2]{L-65}
which states that a sublattice $Y$ is regular if and only if uo-null nets in
$Y$ are uo-null in $X.$ For un-convergence there is no such characterization
and un-null nets in $Y$ may fail to be un-null in $X$ even if $Y$ is a band
(see \cite[Example 3.4]{L-171}).
\end{remark}

\begin{remark}

\begin{enumerate}
\item[(i)] It follows from Proposition \ref{UP} that if $X$ is a KB-space then
every $u\theta$-null net in $X$ is $u\theta$-null in $X^{\ast\ast}$ as $X$ is
a projection band in $X^{\ast\ast};$ see \cite[Theorem 4.60]{b-240}.

\item[(ii)] It can be shown as in \cite[Proposition 4.8]{L-171} that any band
in $X$ is $u\theta$-closed.
\end{enumerate}
\end{remark}

If $\left(  X_{i},\theta_{i}\right)  $ is a family of Riesz spaces $X$ is
equipped with product convergence $\theta$ defined as follows : $x_{\alpha
}\overset{\theta}{\longrightarrow}x$ if and only if $p_{i}\left(  x_{\alpha
}\right)  \overset{\theta_{i}}{\longrightarrow}p_{i}\left(  x\right)  $ for
each $i\in I.$ It is easily seen then that the inbounded convergence $u\theta$
satisfies%
\[
x_{\alpha}\overset{u\theta}{\longrightarrow}x\text{ }\Longleftrightarrow
p_{u}\left(  x_{\alpha}\right)  \overset{\theta_{i}}{\longrightarrow}%
p_{i}\left(  x\right)  \text{ for each }i\in I.
\]

We simply write this as $u\theta=%
{\textstyle\prod\limits_{i}}
u\theta_{i}.$ This is wat said Theorem 3.1 in \cite{L-310}. In this direction
we suggest another proof using convergence of nets. Let $\left(  x_{\alpha
}\right)  =\left(  x_{\alpha}\left(  i\right)  \right)  $ be a net in $X$ then
$x_{\alpha}\overset{u\tau}{\longrightarrow}x$ if and only if for all
$u=\left(  u\left(  i\right)  \right)  \in X_{+}$ we have $\left\vert
x_{\alpha}-x\right\vert \wedge u\overset{\tau}{\longrightarrow}0$ if and only
if $\left\vert x_{\alpha}\left(  i\right)  -x\left(  i\right)  \right\vert
\wedge u\left(  i\right)  \overset{\tau_{i}}{\longrightarrow}0$ for all $i$ if
and only if $x_{\alpha}\left(  i\right)  \overset{u\tau_{i}}{\longrightarrow
}x\left(  i\right)  $ for all $i$ if and only if $x_{\alpha}\longrightarrow x$
in $\left(  X,%
{\textstyle\prod\limits_{i}}
u\tau_{i}\right)  .$

It was shown in \cite[Proposition 2.12.]{L-310} than if $X$ is a Hausdorff
locally solid vector lattice with a Lebesgue topology $\tau$ and $Y$ is a
vector sublattice of \ $X,$ then $Y$ is $\tau$-closed if and only if $Y$ is
$u\tau$-closed. This result is a generalization of an earlier result shown in
\cite{L-65} and the method of the proof is almost the same. We show next that
the conclusion remains true without assuming that the spece has the Lebesgue
property. This answers positively Question 2.13 in \cite{L-310}. Notice also
that the proof presented here is remarkably simpler. This can be deduced from
more general result.

\begin{proposition}
\label{T12}Let $X$ be a vector lattice equipped with a convergence $\theta$
and let $Y$ be a vector lattice of $X.$ Then $Y$ is $\theta$-closed if and
only if $Y$ is $u\theta$-closed.
\end{proposition}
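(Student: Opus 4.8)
The plan is to split the equivalence into its two implications and reduce both to the single observation that $\theta$-convergence implies $u\theta$-convergence. That observation is immediate: if $x_{\alpha}\overset{\theta}{\longrightarrow}x$, then applying Lemma \ref{L0} to $\left(x_{\alpha}\right)$ together with the constant nets $b$ and $c$ (which $\theta$-converge by (1)) yields $\left(x_{\alpha}\vee b\right)\wedge c\overset{\theta}{\longrightarrow}\left(x\vee b\right)\wedge c$ for all $b\leq c$, i.e. $x_{\alpha}\overset{u\theta}{\longrightarrow}x$. Hence $\overline{Y}^{\theta}\subseteq\overline{Y}^{u\theta}$, so if $Y$ is $u\theta$-closed then $Y\subseteq\overline{Y}^{\theta}\subseteq\overline{Y}^{u\theta}=Y$ and $Y$ is $\theta$-closed. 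This settles the easy direction.

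For the substantive direction, assume $Y$ is $\theta$-closed and take $x\in\overline{Y}^{u\theta}$, witnessed by a net $\left(y_{\alpha}\right)$ in $Y$ with $y_{\alpha}\overset{u\theta}{\longrightarrow}x$. First I would reduce to positive data: by Theorem \ref{T1}(i) the nets $\left(y_{\alpha}^{+}\right)$ and $\left(y_{\alpha}^{-}\right)$ lie in $Y$ (a sublattice) and $u\theta$-converge to $x^{+}$ and $x^{-}$ respectively, so, $Y$ being a linear subspace, it suffices to prove $x^{+},x^{-}\in Y$; thus we may assume $x\geq0$ and, after replacing $y_{\alpha}$ by $y_{\alpha}^{+}$, that $y_{\alpha}\geq0$. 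The crux is then the estimate
\[
0\leq x-x\wedge y_{\alpha}=\left(x-y_{\alpha}\right)^{+}\leq\left\vert x-y_{\alpha}\right\vert \wedge x ,
\]
where the last inequality uses $\left(x-y_{\alpha}\right)^{+}\leq\left\vert x-y_{\alpha}\right\vert$ together with $\left(x-y_{\alpha}\right)^{+}=x-x\wedge y_{\alpha}\leq x$ (valid because $y_{\alpha}\geq0$). By Theorem \ref{T1}(iv) applied with $u=x$, the right-hand side $\theta$-converges to $0$, so condition (3) forces $x-x\wedge y_{\alpha}\overset{\theta}{\longrightarrow}0$, and then condition (2) gives $x\wedge y_{\alpha}\overset{\theta}{\longrightarrow}x$. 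Since each $x\wedge y_{\alpha}$ lies in $Y$ and $Y=\overline{Y}^{\theta}$, we conclude $x\in Y$; hence $Y$ is $u\theta$-closed.

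I do not expect a genuine obstacle. The two points that require care are the reduction to a positive net — needed precisely so that $x\wedge y_{\alpha}\leq x$ — and the observation that only the axioms (1)--(4) on $\theta$ are used; in particular, the extra property $\left(\ast\right)$ invoked in several neighbouring results of this section is not needed here, which is what permits the statement at this level of generality.
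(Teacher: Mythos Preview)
Your argument has a genuine gap at the final step. You write ``Since each $x\wedge y_{\alpha}$ lies in $Y$,'' but this is not justified: $Y$ is only a sublattice of $X$, and while $y_{\alpha}\in Y$, the limit $x$ is not yet known to lie in $Y$ --- that is precisely what you are trying to prove. In general $x\wedge y_{\alpha}$ need not belong to $Y$ (consider $X=\mathbb{R}^{2}$, $Y$ the diagonal, $x=(1,0)$, $y_{\alpha}=(1,1)$).

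The paper closes exactly this gap with one extra pass. After the same reduction to positive data, it uses Lemma~\ref{L1} to obtain $y_{\alpha}\wedge u\overset{\theta}{\longrightarrow}x\wedge u$ for every $u\in X_{+}$. Taking $u=y_{\beta}$ for a fixed $\beta$ gives a net $\left(y_{\alpha}\wedge y_{\beta}\right)_{\alpha}$ that \emph{does} lie in $Y$ and $\theta$-converges to $x\wedge y_{\beta}$; since $Y$ is $\theta$-closed this yields $x\wedge y_{\beta}\in Y$ for every $\beta$. Only then is the net $\left(x\wedge y_{\alpha}\right)_{\alpha}$ known to sit inside $Y$, and your convergence $x\wedge y_{\alpha}\overset{\theta}{\longrightarrow}x$ (or equivalently Lemma~\ref{L1} with $u=x$) finishes the proof. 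Your estimate $(x-y_{\alpha})^{+}\leq\left\vert x-y_{\alpha}\right\vert\wedge x$ is correct and gives the same convergence as the paper's use of Lemma~\ref{L1}; you are simply missing the preliminary step that places the approximating net inside $Y$.
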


\begin{proof}
It is clear that $u\theta$-closedness implies $\theta$-closedness. Assume now
that $Y$ is $\theta$-closed and let $\left(  x_{\alpha}\right)  $ be a net in
$Y$ which is $u\theta$-convergent to $x.$ By considering the nets $\left(
x_{\alpha}^{\pm}\right)  $ we may assume that $x_{\alpha}\geq0$ for all
$\alpha.$ By Lemma \ref{L1}, $x_{\alpha}\wedge u\overset{\theta}%
{\longrightarrow}x\wedge u$ for all $u\in X_{+}.$ In particular $x_{\alpha
}\wedge x_{\beta}\overset{\theta}{\longrightarrow}x\wedge x_{\beta}$ for all
$\beta\in A.$ This shows that $x\wedge x_{\alpha}\in Y$ for all $\beta\in A.$
Now since $x_{\alpha}\wedge x\overset{\theta}{\longrightarrow}x\wedge x=x$ we
get $x\in Y,$ as required.
\end{proof}

A sequential version of Proposition \ref{T12} can be obtained in a similar
way. This answers a question asked by Taylor in \cite{L-310}.

\begin{proposition}
Let $X$ be a vector lattice and $Y$ a Riesz subspace of $X.$ Then $Y$ is
sequentially $\theta$-closed if and only if $Y$ is sequentially $u\theta$-closed.
\end{proposition}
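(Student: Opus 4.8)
The plan is to mimic the proof of Proposition \ref{T12}, substituting sequences for nets throughout and being careful that the auxiliary objects produced along the way are themselves sequences rather than general nets. First I would observe that sequential $u\theta$-closedness trivially implies sequential $\theta$-closedness, since a $\theta$-convergent sequence is in particular $u\theta$-convergent (this uses only conditions (1)--(4), exactly as in the net case). So the content is the reverse implication: assume $Y$ is sequentially $\theta$-closed and let $(x_n)$ be a sequence in $Y$ with $x_n\overset{u\theta}{\longrightarrow}x$; I must show $x\in Y$.

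Next I would reduce to positive sequences. By Theorem \ref{T1}(i) applied with the constant sequences, the sequences $(x_n^+)$ and $(x_n^-)$ in $Y$ are $u\theta$-convergent to $x^+$ and $x^-$ respectively; if I can show $x^+\in Y$ and $x^-\in Y$ then $x=x^+-x^-\in Y$. So assume $x_n\geq 0$ for all $n$. By Lemma \ref{L1}, from $x_n\overset{u\theta}{\longrightarrow}x$ we get $x_n\wedge u\overset{\theta}{\longrightarrow}x\wedge u$ for every $u\in X_+$. The key point is that we are free to choose $u$ among the terms of the sequence itself: fixing $m\in\mathbb{N}$ and taking $u=x_m\in Y_+$, the sequence $(x_n\wedge x_m)_{n\in\mathbb{N}}$ lies in $Y$ (since $Y$ is a sublattice) and $\theta$-converges to $x\wedge x_m$; because $Y$ is sequentially $\theta$-closed this yields $x\wedge x_m\in Y$ for every $m$.

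Finally, I would run the argument once more with the roles reversed: the sequence $(x\wedge x_m)_{m\in\mathbb{N}}$ now lies in $Y$, and since $x_m\wedge x\overset{\theta}{\longrightarrow}x\wedge x=x$ (this is Lemma \ref{L1} again, reading off the $u=x$ instance of $x_m\wedge u\overset{\theta}{\longrightarrow}x\wedge u$, or equivalently noting $x\wedge x_m$ is the same sequence indexed by $m$), sequential $\theta$-closedness of $Y$ forces $x\in Y$. This completes the proof. The only place where anything could go wrong is the bookkeeping that every intermediate convergent family really is a \emph{sequence}: the positive-part reduction produces sequences, the family $(x_n\wedge x_m)_n$ is a sequence for each fixed $m$, and $(x\wedge x_m)_m$ is a sequence — so sequential $\theta$-closedness is legitimately invoked each time. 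I expect no genuine obstacle beyond this, since Lemma \ref{L1} and Theorem \ref{T1}(i) hold verbatim for sequences and property $(\ast)$ is not even needed here (it was used in Proposition \ref{UP}, not in \ref{T12}).
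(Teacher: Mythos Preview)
Your proof is correct and follows exactly the approach indicated by the paper, which simply remarks that the sequential version is obtained ``in a similar way'' to Proposition~\ref{T12}. You have faithfully reproduced that argument---reduce to positive sequences via Theorem~\ref{T1}(i), apply Lemma~\ref{L1} first with $u=x_m$ to get $x\wedge x_m\in Y$, then with $u=x$ to conclude $x\in Y$---and you are right to note explicitly that all intermediate families remain sequences, which is the only point requiring care in passing from the net version.
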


\section{Strong completeness}

We fix now a conditional Riesz triple $\left(  E,e,T\right)  $ as it is
defined above. The main objective in this section is to show that the space
$L^{p}\left(  T\right)  $ is strongly complete for all $p\in\left[
1,\infty\right]  .$ To the best of our knowledge, such completeness has only
been established for tow specific values of $p.$ The case $p=\infty$ is no so
hard and was treated in \cite{L-360}, whiereas the case $p=2$ is significantly
more complex and was recently proved in \cite{L-886}. It is noteworthy that
the issue of strong sequential completeness has already been adressed. The
initial result, presented in \cite{L-360}, establishes that the space
$L^{1}\left(  T\right)  $ is strongly sequentially complete. The author, in
\cite{L-1111}, subsequently resolved the problem for the remaining values of
$p.$ In this paper we presente a comprehensive solution of the problem of
strong completeness. Indeed our main result states that proving strong
completeness for a single value of $p\in\lbrack1,\infty)$ is sufficent to
extend the result for the other values. It should be noted that for the
$T$-strong convergence in $L^{1}\left(  T\right)  $ the corresponding
unbounded convergence is the convergence in $T$-conditional probability. This
extension mirrors the behavior observed in classical $L^{1}\left(
\mathbb{P}\right)  $ spaces.

The main result of this section is the following.

\begin{theorem}
\label{Main}Let $\left(  E,e,T\right)  $ be a conditional Riesz triple and
$p\in\left(  1,\infty\right)  .$ Then the following statements are equivalent.

\begin{enumerate}
\item[(i)] $L^{1}\left(  T\right)  $ is strongly complete;

\item[(ii)] $L^{p}\left(  T\right)  $ is strongly complete.
\end{enumerate}
\end{theorem}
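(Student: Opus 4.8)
The plan is to establish the equivalence by a two-sided reduction, using the functional-calculus relationship $x \mapsto |x|^{p}$ between $L^{1}(T)$ and $L^{p}(T)$ together with the generalized Minkowski and Hölder inequalities from \cite{L-180}, and crucially the $T$-universal completeness of $L^{1}(T)$ recalled in Section~2 to pass back and forth between strong Cauchy nets and their order limits in $E^{u}$.

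First I would prove (i)$\Rightarrow$(ii). Let $(x_{\alpha})$ be a strong Cauchy net in $L^{p}(T)$, so $\|x_{\alpha}-x_{\beta}\|_{T,p}\overset{o}{\longrightarrow}0$. The elementary inequality $\bigl||a|^{p}-|b|^{p}\bigr|\le p|a-b|(|a|^{p-1}+|b|^{p-1})$ together with Hölder (with conjugate exponent $p'=p/(p-1)$) shows that $T\bigl||x_{\alpha}|^{p}-|x_{\beta}|^{p}\bigr|$ is controlled by $\|x_{\alpha}-x_{\beta}\|_{T,p}$ times a factor involving $\|x_{\alpha}\|_{T,p}^{p-1}+\|x_{\beta}\|_{T,p}^{p-1}$; since a strong Cauchy net is strongly bounded (its norms form an order-bounded set, as one checks from the Cauchy condition applied against a fixed index), this factor is order-bounded, so $(|x_{\alpha}|^{p})$ is strong Cauchy in $L^{1}(T)$. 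By (i) it converges strongly to some $y\in L^{1}(T)_{+}$; one then uses the $T$-universal completeness of $L^{1}(T)$ to see that $y$ is of the form $|x|^{p}$ for a suitable $x\in L^{p}(T)$ — here I would first extract an order limit $x$ of $(x_{\alpha})$ in $E^{u}$ along the lines of the $p=1$ argument, then show $|x|^{p}=y$ — and finally that $\|x_{\alpha}-x\|_{T,p}\overset{o}{\longrightarrow}0$, again via the same inequality run in the other direction. The reverse implication (ii)$\Rightarrow$(i) is the more delicate one: given a strong Cauchy net $(x_{\alpha})$ in $L^{1}(T)$, one wants to manufacture a strong Cauchy net in $L^{p}(T)$, but raising to the power $1/p$ does not interact as cleanly with the Cauchy condition. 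My approach would be to truncate: set $x_{\alpha}^{(n)}=(x_{\alpha}^{+}\wedge ne)-(x_{\alpha}^{-}\wedge ne)$; each truncated net lives in $L^{\infty}(T)\subseteq L^{p}(T)$, and I would argue the truncations are strong Cauchy in $L^{p}(T)$ uniformly enough in $n$ (using that $|x_{\alpha}^{(n)}-x_{\beta}^{(n)}|\le |x_{\alpha}-x_{\beta}|$ pointwise and that the "tails" $x_{\alpha}-x_{\alpha}^{(n)}$ are small in $\|\cdot\|_{T,1}$-norm uniformly in $\alpha$ by the $T$-uniformity machinery of \cite{L-1111}), apply (ii) to get strong $L^{p}$-limits $x^{(n)}$, and then let $n\to\infty$.

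The key steps, in order, are: (1) record that strong Cauchy nets are strongly bounded; (2) the forward direction via the pointwise inequality for $|a|^{p}-|b|^{p}$ plus Hölder; (3) the identification of the $L^{1}$-limit $y$ with $|x|^{p}$ using $T$-universal completeness and an order-limit extraction in $E^{u}$; (4) the truncation construction for the reverse direction; (5) the uniform tail estimate controlling $x_{\alpha}-x_{\alpha}^{(n)}$, which is where uniform integrability (the notion of $T$-uniformity) enters; (6) assembling the limit. The main obstacle I expect is step (5) together with the limit interchange in step (6): one must ensure that the strong $L^{p}$-limits $x^{(n)}$ of the truncations themselves converge — in $\|\cdot\|_{T,1}$ — to a limit of the original net, and that this candidate limit actually lies in $L^{1}(T)$ rather than merely in $E^{u}$. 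Here the $T$-universal completeness of $L^{1}(T)$ is again the right tool: one controls $\sup_{n}T|x^{(n)}|$ within $L^{1}(T)^{u}$ using the Cauchy bound from step (1), which forces the supremum (hence the limit) back into $L^{1}(T)$. If the uniform tail estimate proves awkward to run at the level of nets, an alternative is to bypass truncation entirely and instead deduce (ii)$\Rightarrow$(i) from the already-established strong \emph{sequential} completeness of $L^{1}(T)$ from \cite{L-360} combined with a net-to-sequence reduction using property~$(\ast)$ and the general unbounded-convergence framework of Section~3 — but I would attempt the direct truncation argument first.
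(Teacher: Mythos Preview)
For (i)$\Rightarrow$(ii) your plan has a real gap at step~(3). Knowing that $(|x_{\alpha}|^{p})$ converges strongly in $L^{1}(T)$ to some $y$ gives no mechanism to ``extract an order limit of $(x_{\alpha})$ in $E^{u}$'': a norm-Cauchy net need not order-converge anywhere, and ``running the same inequality in the other direction'' does not produce the estimate you need (the bound $|a^{p}-b^{p}|\le p|a-b|(|a|+|b|)^{p-1}$ controls differences of powers by differences, not conversely). The paper takes a different and cleaner route: it applies the Lyapunov inequality to conclude that $(x_{\alpha})$ \emph{itself} is strongly Cauchy in $L^{1}(T)$, so hypothesis~(i) delivers a limit $x\in L^{1}(T)$ directly; one then checks $x\in L^{p}(T)$ by truncation (showing $T(x_{\alpha}^{p}\wedge ke)\to T(x^{p}\wedge ke)\le M$ for all $k$) and finally upgrades to $L^{p}$-convergence by writing $T|x_{\alpha}-x|^{p}\le T(|x_{\alpha}-x|^{p}\wedge|x_{\gamma}-x|^{p})+T\bigl||x_{\alpha}-x|^{p}-|x_{\gamma}-x|^{p}\bigr|$ and using the already established convergence in $T$-conditional probability on the first term and the $L^{p}$-Cauchy bound plus H\"{o}lder on the second. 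No $T$-universal completeness or order-limit extraction enters. (Your idea is salvageable if, after reducing to positive nets, you simply set $x:=y^{1/p}$ and invoke the elementary inequality $|a^{1/p}-b^{1/p}|^{p}\le|a-b|$; but that is a different inequality from the one you quote, and not what you described.)

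For (ii)$\Rightarrow$(i) the truncation idea matches the paper exactly, but the paper does \emph{not} need $T$-uniformity or the uniform-in-$n$ tail estimate that you flag as the main obstacle. After obtaining $z_{k}:=\lim_{\alpha}(x_{\alpha}\wedge ke)$ in $L^{p}(T)$ (hence in $L^{1}(T)$), the paper observes that $(z_{k})_{k}$ is increasing with $Tz_{k}\le M$, so $z:=\sup_{k}z_{k}\in L^{1}(T)$ by $T$-universal completeness; interchanging truncation levels gives the compatibility $z\wedge je=z_{j}$, and the characterization of unbounded convergence for positive nets then yields $x_{\alpha}\to z$ in $T$-conditional probability. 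The final upgrade to strong $L^{1}$-convergence is a short $\limsup$ argument using only the Cauchy hypothesis: bound $T|x_{\alpha}-z|$ by $T(|x_{\alpha}-z|\wedge|x_{\beta}-z|)+T|x_{\alpha}-x_{\beta}|$, let $\alpha$ run (the first term goes to $0$ by conditional-probability convergence), then let $\beta$ run. This replaces your steps~(5)--(6) entirely and makes the net-to-sequence fallback unnecessary.
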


\begin{proof}
(i) $\Longrightarrow$ (ii) Assume that $L^{1}\left(  T\right)  $ is strongly
complete and let $\left(  x_{\alpha}\right)  _{\alpha\in A}$ be a net in
$L^{p}\left(  T\right)  $ which is strongly Cauchy, that is the net $\left(
\left\Vert x_{\alpha}-x_{\beta}\right\Vert _{T,p}\right)  _{\left(
\alpha,\beta\right)  \in A\times A}$ converges in order to $0.$ Then there
exists a net $\left(  u_{\gamma}\right)  _{\gamma\in\Gamma}$ in $L^{p}\left(
T\right)  $ such that $u_{\gamma}\downarrow0$ and for each $\gamma\in\Gamma$
there exists $\alpha_{\gamma}\in A$ such that $\left\Vert x_{\alpha}-x_{\beta
}\right\Vert _{T,p}\leq u_{\gamma}$ for every $\alpha,\beta\geq\alpha_{\gamma
}.$ By considering a tail of the net $\left(  x_{\alpha}\right)  $ we can
assume without lost of generality that $\left(  x_{\alpha}\right)  _{\alpha\in
A}$ is norm bounded in $L^{p}\left(  T\right)  ,$ that is $\sup\limits_{\alpha
}T\left(  \left\vert x_{\alpha}\right\vert ^{p}\right)  =M\in L^{1}\left(
T\right)  .$ On the other hand it follows from Lyapunov Inequality
\cite[Corollary 3.8]{L-180} that $\left(  x_{\alpha}\right)  _{\alpha\in A}$
is strongly Cauchy in $L^{1}\left(  T\right)  $ (with respect to the norm
$\left\Vert .\right\Vert _{T,1}$). It follows from (i) tthat $T\left\vert
x_{\alpha}-x\right\vert \overset{o}{\longrightarrow}0$ for some $x\in
L^{1}\left(  T\right)  .$ This shows, in particular, that $\left(  x_{\alpha
}\right)  _{\alpha\in A}$ converges to $x$ in $T$-conditional probability. By
considering the positive and negative parts of $x_{\alpha}$ we can assume that
$x_{\alpha}\geq0$ for all $\alpha\in A.$ We claim that $T\left(  x_{\alpha
}^{p}\wedge ke\right)  \overset{o}{\longrightarrow}T\left(  x^{p}\wedge
ke\right)  $ for all positive real $k.$ To this end we will use the following
inequality, which is true for all $a,b\in X_{+},$%
\[
\left\vert a^{p}-b^{p}\right\vert \leq p\left\vert a-b\right\vert \left(
a\vee b\right)  ^{p-1}.
\]
This yields the following%
\[
T\left\vert x_{\alpha}^{p}\wedge ke-x^{p}\wedge ke\right\vert \leq
pk^{1-1/p}T\left\vert x_{\alpha}-x\right\vert ,
\]
and then $T\left(  x_{\alpha}^{p}\wedge ke\right)  \overset{o}{\longrightarrow
}T\left(  x^{p}\wedge ke\right)  .$ We deduce from this that $T\left(
x^{p}\wedge ke\right)  \leq M$ for every $k.$ By taking the supremum over $k$
we get $Tx^{p}\leq M,$ and, in particular, $x\in L^{p}\left(  T\right)  .$

With this understanding, we now proceed now to establish that $\left\vert
x_{\alpha}-x\right\vert ^{p}$ converges to $0$ in $T$-conditional probability.
According to Lemma \ref{UI-B} it is sufficient to show that $T\left\vert
x_{\alpha}-x\right\vert ^{p}\wedge e\overset{o}{\longrightarrow}0.$ But this
follows from what we have proved above as the net $\left(  x_{\alpha
}-x\right)  $ is $\left\Vert .\right\Vert _{T,p}$-Cauchy in $L^{p}\left(
T\right)  $ and converges with respect to $\left\Vert .\right\Vert _{1,T}$ to
$0$. In particular we have
\begin{equation}
T\left\vert x_{\alpha}-x\right\vert ^{p}\wedge u\overset{o}{\longrightarrow
}0,\text{ for all }u\in L^{1}\left(  T\right)  _{+}. \label{Pr1}%
\end{equation}
We are now ready to show that $\left(  x_{\alpha}\right)  $ converges strongly
to $x$ in $L^{p}\left(  T\right)  .$ Let $\gamma\in\Gamma$ and let $\alpha
\geq\gamma.$ Then%
\begin{align*}
T\left\vert x_{\alpha}-x\right\vert ^{p}  &  =T\left(  \left\vert x_{\alpha
}-x\right\vert ^{p}\wedge\left\vert x_{\gamma}-x\right\vert ^{p}\right)
+T\left(  \left\vert x_{\alpha}-x\right\vert ^{p}-\left\vert x_{\gamma
}-x\right\vert ^{p}\right)  ^{+}\\
&  \leq T\left(  \left\vert x_{\alpha}-x\right\vert ^{p}\wedge\left\vert
x_{\gamma}-x\right\vert ^{p}\right)  +T\left\vert \left\vert x_{\alpha
}-x\right\vert ^{p}-\left\vert x_{\gamma}-x\right\vert ^{p}\right\vert .
\end{align*}
On one hand it follows from (\ref{Pr1}) that%
\[
T\left(  \left\vert x_{\alpha}-x\right\vert ^{p}\wedge\left\vert x_{\gamma
}-x\right\vert ^{p}\right)  \overset{o}{\longrightarrow}0.
\]
On the other hand we have%
\begin{align*}
T\left\vert \left\vert x_{\alpha}-x\right\vert ^{p}-\left\vert x_{\gamma
}-x\right\vert ^{p}\right\vert  &  \leq pT\left(  \left\vert \left\vert
x_{\alpha}-x\right\vert -\left\vert x_{\gamma}-x\right\vert \right\vert
\left(  \left\vert x_{\alpha}-x\right\vert \vee\left\vert x_{\beta
}-x\right\vert \right)  ^{p-1}\right) \\
&  \leq pT\left(  \left\vert x_{\alpha}-x\right\vert \left(  \left\vert
x_{\alpha}-x\right\vert +\left\vert x_{\gamma}-x\right\vert \right)
^{p-1}\right) \\
&  \leq p\left\Vert x_{\alpha}-x_{\beta}\right\Vert _{T,p}\left\Vert \left(
x_{\alpha}\vee x_{\gamma}\right)  ^{p-1}\right\Vert _{T,q}\\
&  =p\left\Vert x_{\alpha}-x_{\beta}\right\Vert _{T,p}T\left(  \left(
x_{\alpha}\vee x_{\gamma}\right)  ^{p}\right)  ^{1/q}\\
&  \leq p2^{p/q}M^{1/q}u_{\gamma}\leq Cu_{\gamma}.
\end{align*}
This shows that
\[
\limsup\limits_{\alpha}T\left(  \left\vert x_{\alpha}-x\right\vert
^{p}\right)  \leq Cu_{\gamma}%
\]
for all $\gamma\in\Gamma.$ Thus $\limsup T\left(  \left\vert x_{\alpha
}-x\right\vert ^{p}\right)  =0$, which shows that $\left(  x_{\alpha}\right)
_{\alpha\in A}$ converges strongly to $x$ in $L^{p}\left(  T\right)  .$ Thus
$L^{p}\left(  T\right)  $ is strongly complete as required.

(ii) $\Longrightarrow$ (i) Conversely assume now that $L^{p}\left(  T\right)
$ is strongly complete for some $p\in\left(  1,\infty\right)  $ and let
$\left(  x_{\alpha}\right)  _{\alpha\in A}$ be a strongly Cauchy net in
$L^{1}\left(  T\right)  .$ As above by considering a tail of the net $\left(
x_{\alpha}\right)  $ we can assume that $\left(  x_{\alpha}\right)  $ is
$T$-bounded, that is, there exists $M\in L^{1}\left(  T\right)  $ such that%
\[
\left\vert Tx_{\alpha}\right\vert \leq M\qquad\text{for all }\alpha\in A.
\]
Also we may assume that $x_{\alpha}\geq0.$ First it can be shown easily that
the net $\left(  x_{\alpha}\wedge ke\right)  _{\alpha\in A}$ is a $\left\Vert
.\right\Vert _{T,p}$-Cauchy net in $L^{p}\left(  T\right)  $ for every integer
$k\in\mathbb{N}.$ It follows from our assumption that $\left(  x_{\alpha
}\wedge ke\right)  _{\alpha\in A}$ converges to $z_{k}$ strongly in
$L^{p}\left(  T\right)  ,$ which implies, in particular, that $x_{\alpha
}\wedge ke\longrightarrow z_{k}$ strongly in $L^{1}\left(  T\right)  .$ We
derive from this that $Tz_{k}\leq M$ for all $k\in\mathbb{N}.$ It is also
clear that the net $\left(  z_{k}\right)  _{k\geq1}$ is increasing. Let us
denote its supremum in $X^{s}$ by $z.$ We will show that $z\in L^{1}\left(
T\right)  $ and that $x_{\alpha}\longrightarrow z$ in $T$-conditional
probability. Observe that%
\[
\left(  x_{\alpha}\wedge je\right)  \wedge ke=\left(  x_{\alpha}\wedge
je\right)  \wedge ke,
\]
and then%
\[
z_{k}\wedge je=ke\wedge z_{j}.
\]
Taking the supremum over $k$ we obtain%
\[
z\wedge je=z_{j}.
\]
According to Lemma \ref{L1} we deduce that $x_{\alpha}\longrightarrow x$ in
$T$-conditional probability. We show in this last step that $x_{\alpha
}\longrightarrow x$ strongly. The proof is very similar to the first part. By
our assumption, we have%
\[
u_{\gamma}:=\sup\limits_{\alpha,\beta\geq\gamma}T\left\vert x_{\alpha
}-x_{\beta}\right\vert \downarrow0.
\]
Observe now that for $\alpha\geq\beta$ we have%
\begin{align*}
T\left\vert x_{\alpha}-x\right\vert  &  =T\left\vert x_{\alpha}-x\right\vert
\wedge\left\vert x_{\beta}-x\right\vert +T\left(  x_{\alpha}-x_{\beta}\right)
^{+}\\
&  \leq T\left\vert x_{\alpha}-x\right\vert \wedge\left\vert x_{\beta
}-x\right\vert +T\left\vert x_{\alpha}-x_{\beta}\right\vert .\\
&  \leq Tx_{\alpha}\wedge\left\vert x_{\beta}-x\right\vert +u_{\beta}.
\end{align*}
Taking the $\lim\sup$ over $\alpha$ we get%
\[
\limsup\limits_{\alpha}Tx_{\alpha}\leq u_{\beta}.
\]
As this happens for every $\beta$ we obtain $\limsup Tx_{\alpha}=0,$ which
completes the proof.
\end{proof}

As a consequence of Theorem \ref{Main}, and \cite[Theorem 4.1]{L-360} we
deduce the following.

\begin{theorem}
\label{Y4-C}Let $\left(  E,e,T\right)  $ be a conditional Riesz triple. Then
$L^{p}\left(  T\right)  $ is strongly complete for every $p\in\left[
1,\infty\right]  .$
\end{theorem}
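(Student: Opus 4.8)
The plan is to reduce Theorem \ref{Y4-C} to the two cases already known in the literature, using Theorem \ref{Main} as the bridge. There are three ranges of $p$ to handle, and each follows from material available in the excerpt or in cited works.

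First I would dispose of $p=\infty$. The space $L^{\infty}(T)$ consists of those $x\in E^{u}$ for which $|x|\le \lambda e$ for some $\lambda\in R(T)_{+}$, with the vector-valued norm $\|x\|_{T,\infty}$ the least such $\lambda$; its strong completeness (for nets) is exactly \cite[Theorem 4.1]{L-360}, so nothing further is needed here. Next I would treat the anchor case $p=2$: by \cite{L-886} (equivalently \cite{L-928}), $L^{2}(T)$ is strongly complete. This is the nontrivial input that makes the argument work, since—as the introduction stresses—the classical proof does not transfer directly to the Riesz-space setting.

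Now comes the main step, which is really just an application of Theorem \ref{Main}. Since $L^{2}(T)$ is strongly complete and $2\in(1,\infty)$, the equivalence (i)$\Leftrightarrow$(ii) of Theorem \ref{Main} (applied with $p=2$) yields that $L^{1}(T)$ is strongly complete. Once $L^{1}(T)$ is known to be strongly complete, I would apply Theorem \ref{Main} again, this time in the direction (i)$\Rightarrow$(ii), for each $p\in(1,\infty)$: this gives strong completeness of $L^{p}(T)$ for every such $p$. Combining with the $p=1$ and $p=\infty$ cases already established covers the whole interval $[1,\infty]$, which is the claim.

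The only point requiring a word of care is whether Theorem \ref{Main} is truly applicable with $p=2$ as the starting value—i.e. that the hypotheses of Theorem \ref{Main} (a conditional Riesz triple $(E,e,T)$, $p\in(1,\infty)$) are met, which they trivially are, and that no circularity is introduced. There is no circularity: Theorem \ref{Main} was proved without assuming strong completeness of any $L^{p}(T)$, so feeding in the external fact ``$L^{2}(T)$ is strongly complete'' is legitimate. Thus the ``main obstacle'' has already been overcome, partly in \cite{L-886} (the $p=2$ case) and partly in the proof of Theorem \ref{Main} (the transfer mechanism); the deduction of Theorem \ref{Y4-C} itself is then immediate.

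\begin{proof}
By \cite[Theorem 4.1]{L-360}, $L^{\infty}(T)$ is strongly complete, which settles the case $p=\infty$. For $p=2$, the strong completeness of $L^{2}(T)$ was established in \cite{L-886} (see also \cite{L-928}). Applying Theorem \ref{Main} with $p=2$, the implication (ii)$\Rightarrow$(i) shows that $L^{1}(T)$ is strongly complete; this settles $p=1$. Finally, for any $p\in(1,\infty)$, the implication (i)$\Rightarrow$(ii) of Theorem \ref{Main} shows that $L^{p}(T)$ is strongly complete. Hence $L^{p}(T)$ is strongly complete for every $p\in[1,\infty]$.
\end{proof}
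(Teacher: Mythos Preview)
Your proposal is correct and follows essentially the same route as the paper: the paper simply states that Theorem \ref{Y4-C} is ``a consequence of Theorem \ref{Main}, and \cite[Theorem 4.1]{L-360}'', and your argument unpacks exactly this deduction. If anything, you are more explicit than the paper in naming the $p=2$ input from \cite{L-928}/\cite{L-886} needed to trigger the implication (ii)$\Rightarrow$(i) of Theorem \ref{Main}.
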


\begin{remark}
If our focus is solely on establishing the strong completeness of
$L^{1}\left(  T\right)  ,$ then only the second part of the proof of Theorem
\ref{Main} suffices. Given the intricate nature of proving the strong
completeness of $L^{2}\left(  T\right)  ,$ an accomplishment of papers
\cite{L-360} and \cite{L-928}, Theorem \ref{Main} opens the door to
envisioning a more straightforward proof for the strong completeness of
$L^{p}\left(  T\right)  $ for $p\in\left(  1,\infty\right)  $ as this can be
obtained by showing the result for just one value of $p.$
\end{remark}

\section{Completeness for convergence in T-conditional probability}

We consider again a conditional Riesz triple $\left(  X,e,T\right)  .$ Our
goal in this section is to prove the completeness of the space $X^{u}$ when
endowed with convergence in $T$-conditional probability.

\begin{theorem}
\label{Y4-B}Let $\left(  X,e,T\right)  $ be a conditional Riesz triple. Then
the space $X^{u}$ is complete with respect to convergence in $T$-conditional probability.
\end{theorem}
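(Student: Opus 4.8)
The plan is to reduce the completeness of $X^{u}$ under $T$-conditional probability convergence to the strong completeness of $L^{1}(T)$ (Theorem~\ref{Y4-C}) together with the characterization of $T$-conditional probability as $u\|\cdot\|_{T,1}$-convergence (Lemma~\ref{UI-B}). The overall strategy mirrors the author's idea — already used in the second part of the proof of Theorem~\ref{Main} and alluded to in the introduction — of truncating by $ke$, using completeness of an $L^{1}$-type space on each truncation level, and then reassembling the limit as a supremum in the universal completion.

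First I would take a net $(x_{\alpha})_{\alpha\in A}$ in $X^{u}$ that is Cauchy for $T$-conditional probability; by Lemma~\ref{UI-B} this means $T(|x_{\alpha}-x_{\beta}|\wedge e)\overset{o}{\to}0$ along $A\times A$. Splitting into positive and negative parts via Theorem~\ref{T1}(iii) and Lemma~\ref{L0}, I may assume $x_{\alpha}\ge 0$. The key observation is that for each fixed $k\in\mathbb{N}$ the truncated net $(x_{\alpha}\wedge ke)_{\alpha\in A}$ lies in $L^{1}(T)$ (it is bounded by $ke$, and $T(ke)\in R(T)\subseteq L^{1}(T)$), and using the Birkhoff inequality $|(x_{\alpha}\wedge ke)-(x_{\beta}\wedge ke)|\le |x_{\alpha}-x_{\beta}|\wedge \text{(something)}$ — more precisely $|a\wedge c - b\wedge c|\le |a-b|\wedge$ controlled by $c$ — one shows $T|(x_{\alpha}\wedge ke)-(x_{\beta}\wedge ke)|\le T(|x_{\alpha}-x_{\beta}|\wedge 2ke)$, so that this truncated net is $\|\cdot\|_{T,1}$-Cauchy in $L^{1}(T)$. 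By Theorem~\ref{Y4-C}, it converges strongly in $L^{1}(T)$ to some $z_{k}\in L^{1}(T)_{+}$.

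Next I would check consistency of the truncation levels: since $(x_{\alpha}\wedge ke)\wedge je = (x_{\alpha}\wedge je)\wedge ke$ for $j\le k$, passing strong $L^{1}$-limits (which are compatible with $\wedge$ with a fixed element, by Lemma~\ref{L0} applied to $\theta=$ order convergence of the $\|\cdot\|_{T,1}$-values, or directly from the Birkhoff estimate) gives $z_{k}\wedge je = z_{j}$. Hence $(z_{k})_{k}$ is increasing; I would set $z:=\sup_{k}z_{k}$ in $X^{u}$, which exists since $X^{u}$ is universally complete. Then $z\wedge je = z_{j}$ for all $j$, so in particular $z\wedge ke \in L^{1}(T)$ for every $k$; note $z$ itself need not be in $X$ or $L^{1}(T)$, which is exactly why we work in $X^{u}$ and why $T$-conditional probability (not strong $L^{1}$) is the right notion of convergence here. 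Finally, to show $x_{\alpha}\to z$ in $T$-conditional probability, by Lemma~\ref{UI-B}(ii) it suffices to show $T(|x_{\alpha}-z|\wedge u)\overset{o}{\to}0$ for every $u\in X^{u}_{+}$, and by Lemma~\ref{L1}/Remark~\ref{R1} (the positive-net characterization of $u\theta$-convergence, which applies since Lemma~\ref{UI-B} identifies $T$-conditional probability convergence with $u\|\cdot\|_{T,1}$-convergence) it is enough to treat $u = ke$. Then I would estimate $T(|x_{\alpha}-z|\wedge ke)$ by inserting the truncations: $|x_{\alpha}\wedge ke - z\wedge ke| = |x_{\alpha}\wedge ke - z_{k}|\overset{\|\cdot\|_{T,1}}{\to}0$ by construction, and one controls the difference between $|x_{\alpha}-z|\wedge ke$ and $|x_{\alpha}\wedge ke - z\wedge ke|$ using a Birkhoff-type inequality together with the Cauchy condition to push the tail down uniformly — essentially the same $\limsup$ argument as at the end of the proof of Theorem~\ref{Main}.

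**The main obstacle** I anticipate is the last step: controlling $T(|x_{\alpha}-z|\wedge ke)$ from the convergence of the \emph{truncated} nets. The inequality $|\,|x_{\alpha}-z|\wedge ke - |x_{\alpha}\wedge ke - z\wedge ke|\,|$ is not pointwise small in general (truncation and absolute value do not commute cleanly when both $x_{\alpha}$ and $z$ can be large), so one cannot simply say the two quantities are close. The fix is the by-now-familiar device: write $T(|x_{\alpha}-z|\wedge ke) \le T(|x_{\alpha}-x_{\beta}|\wedge ke) + T(|x_{\beta}-z|\wedge ke)$ type splittings combined with the fact that for $\beta$ large the first term is $\le u_{\beta}$ with $u_{\beta}\downarrow 0$, while for each fixed large $\beta$ one genuinely does have $|x_{\beta}-z|\wedge ke$ approximated by truncated limits as $\beta$ grows — careful bookkeeping with the directed set $A\times A$ and a $\limsup_{\alpha}$ argument closes it. I would also need to double-check that Lemma~\ref{UI-B} and the positive-net form of Lemma~\ref{L1} are legitimately available for nets in $X^{u}$ (the excerpt explicitly asserts Lemma~\ref{UI-B} extends to $X^{u}$, and Lemma~\ref{L1} is stated for a general $(X,\theta)$ with $\theta = u\|\cdot\|_{T,1}$-convergence satisfying conditions (1)--(4), which is routine to verify).
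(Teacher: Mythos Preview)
Your overall architecture --- reduce to positive nets, truncate, use strong completeness of $L^{1}(T)$ on each truncation level, reassemble via a supremum --- is exactly the paper's. But there is a genuine gap at the reassembly step, and it is the heart of the matter.

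You write: ``set $z:=\sup_{k}z_{k}$ in $X^{u}$, which exists since $X^{u}$ is universally complete.'' This is not justified. Universal completeness means Dedekind completeness plus lateral completeness; it guarantees suprema of order-bounded sets and of pairwise disjoint families, but \emph{not} of arbitrary increasing sequences. Your $z_{k}$ satisfies only $0\le z_{k}\le ke$, and as $k$ varies there is no a priori upper bound in $X^{u}$. (Think of $X^{u}=\mathbb{R}$: the sequence $z_{k}=k$ has no supremum.) The paper handles this by taking the supremum in the sup-completion $X^{s}$, where every subset has a supremum, and then devoting its entire Step~2 to showing that $\ell\in X^{u}$. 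That step is a contradiction argument: if $\ell\in X^{s}\setminus X^{u}$, one invokes \cite[Corollary 15]{L-444} to find $u>0$ with $\ell\ge nu$ for all $n$, reduces to $x_{\alpha}\le\ell=\sup_{n}nu$, and then exploits the Cauchy condition to derive $\sup_{\beta,\beta'\ge\alpha}T(|x_{\beta}-x_{\beta'}|\wedge u)\ge Tu$ for every $\alpha$, contradicting Cauchyness.

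This is not a bookkeeping detail you can absorb into the final $\limsup$ argument: it is where the Cauchy hypothesis does real work beyond producing the $z_{k}$'s, and it is precisely the content that makes the subsequent corollary (uo-completeness of universally complete lattices) nontrivial. Your sketch of the final convergence step (controlling $T(|x_{\alpha}-z|\wedge ke)$ once $z\in X^{u}$ is known) is fine, and indeed the paper dispatches it quickly; but you have skipped the step that justifies the existence of $z$ in the right space.
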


\begin{proof}
Consider a Cauchy net $(x_{\alpha})_{\alpha\in A}$ in $X^{u}$ with respect to
convergence in $T$-conditional probability. We have to show that $(x_{\alpha
})_{\alpha\in A}$ is convergent in $T$-conditional probability. By considering
the nets $(x_{\alpha}^{+})_{\alpha\in A}$ and $(x_{\alpha}^{-})_{\alpha\in A}$
we may assume that $(x_{\alpha})_{\alpha\in A}$ is positive. For each $u\in
X_{+}$ there exists $\ell_{u}\in X_{+}$ such that $T\left\vert x_{\alpha
}\wedge u-\ell_{u}\right\vert \overset{o}{\longrightarrow}0$. We put
$\ell=\sup\limits_{u\in X_{+}}\ell_{u}\in X^{s}$. We split the proof into two
steps. We will show first that $\ell_{u}=\ell\wedge u$ for every $u\in X_{+}$
and then we prove that $\ell\in X^{u}.$ Thus we will get $T\left\vert
x_{\alpha}\wedge u-\ell\wedge u\right\vert \overset{o}{\longrightarrow}0$ for
every $u\in X_{+}.$ This allows us to conclude via the folloing inequality,
which holds for all $x,y,u\in X_{+}^{u}:$%
\[
\left(  x-y\right)  \wedge u\leq2\left(  x\wedge u-y\wedge y\right)  ,
\]
which in turn yields%
\[
T\left(  \left\vert x_{a}-x\right\vert \wedge u\right)  \leq2T\left\vert
x_{\alpha}\wedge u-x\wedge u\right\vert .
\]

\textbf{Step 1.} We will show first that%
\[
\ell_{a}=\ell\wedge a\text{ for every}\ a\in X_{+}.
\]
We argue by contradiction and assume that $x_{\alpha}\wedge a\longrightarrow
\ell_{a}<\ell\wedge a$ strongly in $L^{1}\left(  T\right)  $ for some $a\in
X_{+}$. According to \cite[Property P6, pp. 217]{L-444} we know that%
\[
\ell\wedge a=\left(  \sup\limits_{u\in X_{+}}\ell_{u}\right)  \wedge
a=\sup\limits_{u\in X_{+}}(\ell_{u}\wedge a).
\]
\newline Hence there exists $b\in X_{+}$ such that%
\[
\ell_{b}\wedge a>\ell_{b}\wedge a\wedge\ell_{a}=\ell_{b}\wedge\ell_{a}.
\]
On one hand we have%
\[
x_{\alpha}\wedge b\wedge a=(x_{\alpha}\wedge b)\wedge a\overset{\rho
}{\longrightarrow}\ell_{b}\wedge a,
\]
\newline and on the other hand%
\[
x_{\alpha}\wedge b\wedge a=(x_{\alpha}\wedge b)\wedge(x_{\alpha}\wedge
a)\overset{\rho}{\longrightarrow}\ell_{b}\wedge\ell_{a},
\]
which leads to the contradiction $\ell_{a}\wedge\ell_{b}=\ell_{b}\wedge a.$

\textbf{Step 2.} We claim that $\ell\in X^{u}$. We argue again by
contradiction and we suppose that $\ell\in X^{s}\backslash X^{u}$. Then by
\cite[Corollary 15]{L-444} there exists $u\in X_{+}$ such that%
\[
\ell\geq nu>0.\text{ for all }n\in\mathbb{N}.
\]
\newline Now, put $L=\sup nu\in X^{s}$ and observe that $x_{\alpha}\wedge
nu\uparrow x_{\alpha}\wedge L$ as $n\longrightarrow\infty$ and that, for all
$y\in X_{+}$, we have%
\[
(x_{\alpha}\wedge L)\wedge y=(x_{\alpha}\wedge y)\wedge L\overset{\rho
}{\longrightarrow}(\ell\wedge y)\wedge L=y\wedge L.
\]
Hence, by considering the net $(x_{\alpha}\wedge L)$ instead of $(x_{\alpha})$
and $L$ instead of $\ell$, we may assume that $x_{\alpha}\leq\ell=\sup nu$. In
particular, for every $\alpha\in A$ and $n\in\mathbb{N}$, we have%
\begin{equation}
\left(  x_{\alpha}\wedge nu\right)  _{n}\overset{o}{\longrightarrow}x_{\alpha
}\text{ and }\left(  x_{\alpha}\wedge nu\right)  _{\alpha}\overset
{o}{\longrightarrow}nu. \tag{$\left(       \ast\right)       $}%
\end{equation}

For fixed $\alpha\in A$, we can find a net $(u_{\gamma})_{\gamma\in\Gamma}$
with $u_{\gamma}\downarrow0$ and such that for every $\gamma\in\Gamma$ there
is $n_{\gamma}$ satisfying:%
\[
|x_{\alpha}\wedge nu-x_{\alpha}|\leq u_{\gamma}\qquad n\geq n_{\gamma}.
\]
Pick $\gamma$ in $\Gamma$ and choose $n_{\gamma}$ as above. Then we have for
all $n\geq n_{\gamma}$,%
\begin{align*}
x_{\beta}-x_{\alpha}  &  =x_{\beta}-x_{\beta}\wedge(n+1)u\\
&  +x_{\beta}\wedge(n+1)u-x_{\alpha}\wedge nu+x_{\alpha}\wedge nu-x_{\alpha}\\
&  \geq x_{\beta}\wedge(n+1)u-x_{\alpha}\wedge nu-|x_{\alpha}\wedge
nu-x_{\alpha}|\\
&  \geq x_{\beta}\wedge(n+1)u-nu-u_{\gamma}.
\end{align*}
It follows that%
\begin{align*}
T|x_{\beta}-x_{\beta^{\prime}}|\wedge u  &  \geq\lim_{\beta}T\left(
(x_{\beta}\wedge(n+1)u-nu-u_{\gamma})\wedge u\right) \\
&  =T\left(  (u-u_{\gamma})\wedge u\right)  ,
\end{align*}
where the last equality follows from $(\ast)$. Since this holds for every
$\gamma$ in $\Gamma$ we conclude that%
\[
\sup\limits_{\beta,\beta\geq\alpha}T|x_{\beta}-x_{\beta^{\prime}}|\wedge u\geq
Tu.
\]
\newline But this occurs for every $\alpha\in A$, which contradicts the fact
that the net $(x_{\alpha})$ is Cauchy with respect with $T$-conditional
probability and completes the proof.
\end{proof}

Thanks to Theorem \ref{Y4-B} we obtain a short proof of the hard part on the
main result in \cite{L-444}.

\begin{corollary}
Every universally complete vector lattice is unbounded order complete.
\end{corollary}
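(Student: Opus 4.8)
The plan is to realise unbounded order convergence in a universally complete vector lattice as convergence in $T$-conditional probability for the \emph{identity} operator, and then to quote Theorem~\ref{Y4-B} essentially verbatim. Let $X$ be a universally complete vector lattice. First I would produce a weak order unit: by Zorn's Lemma choose a maximal family $\{e_i\}_{i\in I}\subseteq X_+\setminus\{0\}$ of pairwise disjoint elements; lateral completeness gives $e:=\sup_i e_i\in X$, and maximality forces $e$ to be a weak order unit, since any $x\in X_+$ with $x\wedge e=0$ is disjoint from every $e_i$ and hence must vanish (otherwise $\{e_i\}_{i\in I}\cup\{x\}$ would be a strictly larger disjoint family). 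As $X$ is moreover Dedekind complete, the triple $(X,e,\mathrm{Id}_X)$ is a conditional Riesz triple: $\mathrm{Id}_X$ is a strictly positive, order continuous projection whose range $X$ is a Dedekind complete Riesz subspace.

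Next I would translate the two relevant notions. By Lemma~\ref{UI-B} (which, as noted, holds without change on $X^u$), a net $(x_\alpha)$ in $X$ converges to $x$ in $\mathrm{Id}_X$-conditional probability if and only if $|x_\alpha-x|\wedge u\overset{o}{\longrightarrow}0$ for every $u\in X_+$, i.e.\ exactly when $(x_\alpha)$ is unbounded order convergent to $x$. Applying the same equivalence to the double net $(x_\alpha-x_\beta)_{(\alpha,\beta)\in A\times A}$ shows likewise that a net is Cauchy for $\mathrm{Id}_X$-conditional probability precisely when it is uo-Cauchy.

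Finally, since $X$ is universally complete we have $X^u=X$, so Theorem~\ref{Y4-B} applied to $(X,e,\mathrm{Id}_X)$ asserts that every net in $X$ that is Cauchy for $\mathrm{Id}_X$-conditional probability converges in $\mathrm{Id}_X$-conditional probability. By the previous paragraph this is exactly the statement that every uo-Cauchy net in $X$ is uo-convergent in $X$, i.e.\ that $X$ is unbounded order complete. The only step requiring any care is the first one — manufacturing the weak order unit — because the notion of a conditional Riesz triple, and hence Theorem~\ref{Y4-B}, presupposes such a unit; once it is in place the corollary is a pure reformulation and no further work is needed.
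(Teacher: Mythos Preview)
Your proposal is correct and follows essentially the same route as the paper: set $T=\mathrm{Id}_X$ on a universally complete $X$ with a chosen weak order unit, observe that $T$-conditional probability convergence coincides with uo-convergence, and invoke Theorem~\ref{Y4-B} together with $X^u=X$. The only difference is that you supply the standard Zorn/lateral-completeness construction of the weak order unit and spell out the Cauchy translation, whereas the paper simply asserts both.
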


\begin{proof}
Consider a universally complete Riesz space $X.$ We choose a weak unit $e$ for
$X$ and we consider the conditional expectation operator $T=\operatorname*{Id}%
\nolimits_{X}.$ Then the convergence in $T$-conditional probability coincides
with the unbounded order convergence. Thus Theorem \ref{Y4-B} tells us that
$X=X^{u}$ is unbounded order complete.
\end{proof}

\begin{remark}
In the special case when $E=L_{0}\left(  \mu\right)  $ where $\mu$ is a
probability measure and $T$ is the expectation operator, Theorem Y4-B yields
that t\textit{he topology of convergence in probability defines a complete
metrizable locally solid topology on} $L_{0}(\mu)$ (\textit{see \cite[Theorem
13.41]{b-187})}.
\end{remark}

\section{Applications}

Consider a conditional expectation preserving system represented by the
4-tuple $\left(  E,e,T,S\right)  ,$ where $\left(  E,e,T\right)  $ is a
conditional Riesz triple, and $S$ is an order continuous Riesz homomorphism on
$E$ that satisfies $Se=e$ and $TS=T.$ It was shown in \cite[Theorem 3.7]{L-33}
that if $E$ is $T$-universally complete (as in the case when $E=L^{1}\left(
T\right)  $), then for any $x\in L^{1}\left(  T\right)  ,$ the sequence
$S_{n}x:=\dfrac{1}{n}\sum\limits_{k=0}^{n-1}S^{k}x$ is order convergent to
$L_{S}x,$ where $L_{S}$ is a conditional expectation operator. Furthermore,
the system is ergodic if and only if $L=T.$ More recently in \cite{L-1100}, it
was additionally shown that $S$ acts as an isometry from $L^{p}\left(
T\right)  $ to $L^{p}\left(  T\right)  .$ With these considerations and
relying on Theorem \ref{Y4-C}, we are able to extend some results from the
classical measure theory presented in \cite[Theorem 8.8]{b-2982} to the
setting of Riesz spaces. Let us recall that if $\left(  \Omega,\mathcal{F}%
,\mu,\tau\right)  $ is a measure preserving system and $1\leq p<\infty$ and
$S$ is the operator defined as $Sf=f\circ\tau,$ for $f\in L^{1}\left(
\mu\right)  ;$ then the Ces\`{a}ro means $\dfrac{1}{n}\sum\limits_{k=0}%
^{n-1}S^{k}f$ converge in $L^{p}\left(  T\right)  $ for every $f\in
L^{p}\left(  T\right)  $ (see \cite[Theorem 8.8]{b-2982}). Moreover, the its
limit $P$ is a projection with its range forming a Banach sublattice of
$L^{p}\left(  \mu\right)  $ and $P_{\tau}$ satisfies $\mathbb{E}%
Pf=\mathbb{E}f$ for every $f\in L^{p}\left(  \mu\right)  ,$ where $\mathbb{E}$
designs the expectation operator. Further\`{u}ore, $P\left(  fPg\right)
=PfPg$ for $f\in L^{p},g\in L^{q}$ with $\dfrac{1}{p}+\dfrac{1}{q}=1.$ We will
extend these results to the setting of Riesz spaces. Before presenting our
results we need the following lemma.

\begin{lemma}
\label{Y4-D}Let $\left(  E,e,T\right)  $ be a conditional Riesz triple. Then
the ideal $E_{e}$ generated by $e$ is strongly dense in $L^{p}\left(
T\right)  .$ In particular the space $L^{\infty}\left(  T\right)  $ is
strongly dense in $L^{p}\left(  T\right)  $ for every $p\in\lbrack1,\infty).$
\end{lemma}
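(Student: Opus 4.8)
The plan is to approximate an arbitrary $x\in L^{p}(T)$ by its order truncations at multiples of $e$. First I would fix $x\in L^{p}(T)$ and set $x_{n}:=\bigl(x\vee(-ne)\bigr)\wedge(ne)$ for $n\in\mathbb{N}$. Then $|x_{n}|\le ne$, so $x_{n}\in E_{e}$; and since $|x_{n}|\le|x|\in L^{p}(T)$ while $L^{p}(T)=\{y\in E^{u}:|y|^{p}\in L^{1}(T)\}$ is an ideal of $E^{u}$ (because $L^{1}(T)$ is), we also have $x_{n}\in L^{p}(T)$. A routine lattice computation gives $|x-x_{n}|=(|x|-ne)^{+}=|x|-|x|\wedge ne$, and since $e$ is a weak order unit of $E$, hence of $E^{u}$, we have $|x|\wedge ne\uparrow|x|$, so $|x-x_{n}|\downarrow0$ in $E^{u}$.

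Next I would transport this decrease through the $p$-th power and through $T$. As $0\le(|x|-ne)^{+}\le|x|$ and $t\mapsto t^{p}$ is order preserving, the sequence $|x-x_{n}|^{p}=\bigl((|x|-ne)^{+}\bigr)^{p}$ is decreasing and dominated by $|x|^{p}\in L^{1}(T)$; and it decreases to $0$, since $0\le c\le|x-x_{n}|^{p}$ for all $n$ forces $c^{1/p}\le(|x|-ne)^{+}\downarrow0$, hence $c=0$. Thus $|x-x_{n}|^{p}\downarrow0$ in $L^{1}(T)_{+}$, and order continuity of $T$ gives $T|x-x_{n}|^{p}\downarrow0$. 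Taking $p$-th roots (again order preserving) yields $\|x-x_{n}\|_{T,p}=\bigl(T|x-x_{n}|^{p}\bigr)^{1/p}\downarrow0$, that is $\|x-x_{n}\|_{T,p}\overset{o}{\longrightarrow}0$. So $x_{n}\to x$ strongly in $L^{p}(T)$, proving that $E_{e}$ is strongly dense in $L^{p}(T)$. For the final assertion I would simply note that each $x_{n}$ is dominated by $ne\in R(T)_{+}\subseteq L^{\infty}(T)$, hence lies in $L^{\infty}(T)$, so the same approximating sequence shows $L^{\infty}(T)$ is strongly dense in $L^{p}(T)$ for every $p\in[1,\infty)$; equivalently, $E_{e}\subseteq L^{\infty}(T)\subseteq L^{p}(T)$ and strong density of the smaller space forces that of the larger.

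I do not expect a real obstacle here: this is a preparatory density lemma and the truncation argument is essentially forced. The only two points needing a line of care are (a) that the truncations remain in $L^{p}(T)$, which relies on $L^{p}(T)$ being an ideal of $E^{u}$, and (b) the elementary functional-calculus fact that $t\mapsto t^{p}$ and $t\mapsto t^{1/p}$ send order-null decreasing sequences to order-null decreasing sequences, which is precisely what legitimizes applying order continuity of $T$.
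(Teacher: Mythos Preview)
Your proof is correct and follows essentially the same truncation-at-$ne$ argument as the paper: the paper simply reduces to $x\ge 0$, notes $x\wedge ne\uparrow x$, and passes through $|\cdot|^{p}$ and $T$, whereas you handle general $x$ via the two-sided truncation $x_{n}=(x\vee(-ne))\wedge ne$ and the identity $|x-x_{n}|=(|x|-ne)^{+}$, which amounts to the same thing. Your version is more carefully spelled out (membership of $x_{n}$ in $L^{p}(T)$, order continuity of $T$, the $p$-th root step, and the $L^{\infty}(T)$ inclusion), but the underlying idea is identical.
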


\begin{proof}
Let $x\in L^{p}\left(  T\right)  _{+}.$ Then $x\wedge ne\uparrow x$ and then
$\left\vert x\wedge ne-x\right\vert ^{p}\overset{o}{\longrightarrow}0.$ Hence
$\left\Vert x\wedge ne-x\right\Vert _{T,p}\longrightarrow0$, which proves the lemma.
\end{proof}

\begin{theorem}
\label{Y4-E}Let $\left(  E,e,T,S\right)  $ be a conditional expectation
preserving system. Then $\left(  S_{n}x\right)  $ convegres strongly in
$L^{p}\left(  T\right)  $ to $L_{S}x$ for every $x\in L^{p}\left(  T\right)
$. Moreover we have%
\begin{equation}
L_{S}\left(  xL_{S}y\right)  =L_{S}x.L_{S}y\text{ for all }x\in L^{p}\left(
T\right)  ,y\in L^{q}\left(  T\right)  \label{E1}%
\end{equation}
with $\dfrac{1}{p}+\dfrac{1}{q}=1.$
\end{theorem}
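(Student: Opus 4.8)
The plan is to establish the two assertions — strong convergence of the Cesàro means $S_n x \to L_S x$ in $L^p(T)$, and the multiplicative identity \eqref{E1} — in that order, leaning on the already-known order convergence $S_n x \overset{o}{\longrightarrow} L_S x$ in $L^1(T)$ from \cite[Theorem 3.7]{L-33}, the fact that $S$ is an $L^p(T)$-isometry from \cite{L-1100}, the strong completeness of $L^p(T)$ from Theorem \ref{Y4-C}, and the strong density of $L^\infty(T)$ from Lemma \ref{Y4-D}. First I would handle the bounded case: if $x \in L^\infty(T)$, say $|x| \le c e$, then each $S^k x$ satisfies $|S^k x| \le S^k(ce) = ce$ since $S$ is a Riesz homomorphism with $Se = e$, so all the averages $S_n x$ lie in the order interval $[-ce, ce]$. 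Combined with $S_n x \overset{o}{\longrightarrow} L_S x$ in $L^1(T)$ (hence convergence in $T$-conditional probability), an order-bounded net converging in $T$-conditional probability converges strongly in $L^p(T)$: indeed $\|S_n x - L_S x\|_{T,p}^p = T|S_n x - L_S x|^p \le (2c)^{p-1} T|S_n x - L_S x| \cdot \mathbf{1}$-type bound after truncating, so $S_n x \to L_S x$ strongly in $L^p(T)$.

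Next I would pass from $L^\infty(T)$ to general $x \in L^p(T)$ by a standard $3\varepsilon$-argument using that $S_n$ and $L_S$ are contractions (in fact isometries for $S$, and $L_S$ is a conditional expectation, hence $\|L_S y\|_{T,p} \le \|y\|_{T,p}$, and therefore $\|S_n y\|_{T,p} \le \|y\|_{T,p}$ as well). Given $x \in L^p(T)_+$ and $\varepsilon > 0$ we approximate $x$ strongly by $x \wedge Ne \in L^\infty(T)$ (Lemma \ref{Y4-D}); writing $x = (x \wedge Ne) + r$ with $\|r\|_{T,p}$ order-small, we get $\|S_n x - L_S x\|_{T,p} \le \|S_n(x \wedge Ne) - L_S(x \wedge Ne)\|_{T,p} + \|S_n r\|_{T,p} + \|L_S r\|_{T,p} \le \|S_n(x \wedge Ne) - L_S(x \wedge Ne)\|_{T,p} + 2\|r\|_{T,p}$. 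The first term is order-convergent to $0$ by the bounded case, and the vector-valued $\limsup$ over $n$ is then $\le 2\|r\|_{T,p}$, which can be made order-small; letting $N \to \infty$ forces the $\limsup$ to $0$, giving strong convergence. (Alternatively, one invokes strong completeness of $L^p(T)$ directly: the Cesàro means form a strong Cauchy net because they do so on the dense set $L^\infty(T)$ and the operators are equicontinuous, so they strongly converge to some limit, which must coincide with $L_S x$ since strong convergence implies convergence in $T$-conditional probability and the limit is unique.)

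For the identity \eqref{E1}, I would first verify it for $x, y \in L^\infty(T)$, where all products are well-defined in $L^\infty(T) \subseteq L^1(T)$ and everything is order-bounded. The key algebraic input is that $S$ is a Riesz homomorphism with $TS = T$, so $L_S$ inherits the module-type property $L_S(x \cdot L_S y) = L_S x \cdot L_S y$ — this is the Riesz-space analogue of the classical fact that the ergodic projection is a conditional expectation onto the invariant subspace and pulls out invariant factors; one checks $S^k(x \cdot L_S y) $ relates to $(S^k x)(L_S y)$ using $S L_S = L_S$ (invariance of the range of $L_S$ under $S$), averages, and passes to the order limit using the bounded strong/order convergence just established, together with the fact that multiplication is order continuous on order-bounded sets in the $f$-algebra $L^\infty(T)$. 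Then I would extend to $x \in L^p(T)$, $y \in L^q(T)$ by truncating both to $L^\infty(T)$, using Hölder's inequality \cite[Corollary 3.8 or the Hölder inequality]{L-180} to control $xL_S y \in L^1(T)$, and passing to the strong limit; here the main obstacle — and the step I would be most careful about — is justifying that multiplication $L^p(T) \times L^q(T) \to L^1(T)$ is jointly strongly continuous enough to commute with the truncation limits, which requires a Hölder estimate of the form $\|x L_S y - x_N L_S y_N\|_{T,1} \le \|x - x_N\|_{T,p}\|L_S y\|_{T,q} + \|x_N\|_{T,p}\|L_S(y - y_N)\|_{T,q}$ and the contractivity of $L_S$ on $L^q(T)$. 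Once this continuity is in hand, \eqref{E1} for general $x,y$ follows from the bounded case by taking strong limits on both sides.
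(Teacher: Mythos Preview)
Your outline is correct and structurally close to the paper's proof: both first handle $x$ in the order ideal of $e$ (equivalently $L^\infty(T)$) by observing that the averages $S_n x$ stay order bounded, so that the known order convergence $S_n x \overset{o}{\to} L_S x$ upgrades to strong $L^p(T)$-convergence, and then pass to general $x \in L^p(T)$ via the strong density of Lemma~\ref{Y4-D}. Two differences are worth flagging. For the extension step, the paper adopts precisely your ``alternative'' route: it shows $(S_n x)$ is strongly Cauchy through $\|S_n x - S_k x\|_{T,p} \le 2\|x - x_\alpha\|_{T,p} + \|S_n x_\alpha - S_k x_\alpha\|_{T,p}$ (using only that $S$ is an $L^p(T)$-isometry), invokes Theorem~\ref{Y4-C} to obtain a limit in $L^p(T)$, and then identifies it with $L_S x$. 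This cleanly avoids the contractivity $\|L_S r\|_{T,p} \le \|r\|_{T,p}$ that your primary $3\varepsilon$ argument uses as an input---a priori one only knows $L_S$ acts on $L^1(T)$, so invoking $L^p$-contractivity of $L_S$ before establishing $L_S x \in L^p(T)$ is a mild circularity (resolvable by a conditional Jensen inequality, but the paper's Cauchy-plus-completeness route sidesteps it). For the identity~\eqref{E1}, the paper's argument is one line: since $L_S$ is itself a conditional expectation it is an averaging operator by \cite{L-24}, whence $L_S(x L_S y) = L_S x \cdot L_S y$ directly, with H\"older used only to confirm $x L_S y \in L^1(T)$; your longer derivation through $S^k(x L_S y) = (S^k x)(L_S y)$, bounded limits, and truncation is correct but unnecessary once the averaging property is at hand.
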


\begin{proof}
(i) We know by \cite[Theorem 3.9]{L-33} that $\left(  S_{n}x\right)  $ is
order convergent to $L_{S}x$ for every $x\in L^{1}\left(  T\right)  $ and then
strongly in norm $\left\Vert .\right\Vert _{1,T}.$ Thus if $\left(
S_{n}x\right)  $ is strongly convergent in $L^{p}\left(  T\right)  $ then it
necessarily converges to $L_{S}x.$ We will conclude by showing that the
sabspace of $L^{p}\left(  T\right)  $ defined by%
\[
V=\left\{  x\in L^{p}\left(  T\right)  :\left(  S_{n}x\right)  \text{
converges in }L^{p}\left(  T\right)  \right\}
\]
is $L^{p}\left(  T\right)  .$ To this end we will show that $V$ is strongly
closed and contains $L^{\infty}\left(  T\right)  $ and derive the result from
Lemma \ref{Y4-D} is contained in $V$ and that $V$ is strongly closed in
$L^{p}\left(  T\right)  $.

(a) Let us establish first that $L^{\infty}\left(  T\right)  \subseteq V.$
Consider an element $x$ in $L^{\infty}\left(  T\right)  ;$ then $\left\vert
x\right\vert \leq r$ for some $r\in R\left(  T\right)  .$ According to
\cite[Theorem 3.9]{L-33}, we knwo that $\left\vert S_{n}x-Lx\right\vert
\overset{o}{\longrightarrow}0$ in $L^{1}\left(  T\right)  .$ Consequently,
$\left\vert S_{n}x-Lx\right\vert ^{p}\overset{o}{\longrightarrow}0$ in
$E^{u}=L^{1}\left(  T\right)  ^{u}.$ As $L^{p}\left(  T\right)  $ is a
Dedekind complete Riesz subspace of $E^{u},$ to prove that $\left\Vert
S_{n}x-x\right\Vert _{T,p}\overset{o}{\longrightarrow}0$ it is sufficient to
prove that $\left(  \left\vert S_{n}x-Lx\right\vert ^{p}\right)  $ is order
bounded in $L^{1}\left(  T\right)  .$ As $ST=T$ (see \cite[Lemma 3]{L-1100})
and $S$ is a Riesz homomorphism we have%
\[
\left\vert S^{k}x\right\vert ^{p}=\left(  S^{k}\left\vert x\right\vert
\right)  ^{p}\leq\left(  S^{k}r\right)  ^{p}=r^{p}.
\]
It follows from the convexity of the map $u\longmapsto\left\vert u\right\vert
^{p}$ that $\left\vert S_{n}\left(  x\right)  \right\vert ^{p}\leq r^{p}.$
Since $S_{n}x\overset{o}{\longrightarrow}L_{S}x$ we derive that $\left\vert
Lx\right\vert ^{p}\leq r^{p}$ and thereby the the sequence is $\left(
\left\vert S_{n}x-L_{S}x\right\vert ^{p}\right)  $ is order bounded in
$L^{1}\left(  T\right)  $ as desired.

(b) We show now that $V$ is strongly closed in $L^{p}\left(  T\right)  .$
Assume that $\left(  x_{\alpha}\right)  _{\alpha\in A}$ is a net in $V$ that
is strongly convergent to some element $x$ in $L^{p}\left(  T\right)  .$ We
aim to show that $x\in V.$ According to Theorem \ref{Y4-C} it is sufficent to
show that the sequence $\left(  S_{n}x\right)  $ is strongly Cauchy in
$L^{p}\left(  T\right)  .$ Now observe that we have for $n,k\in\mathbb{N},$%
\begin{align*}
\left\Vert S_{n}x-S_{k}x\right\Vert _{T,p}  &  \leq\left\Vert S_{n}%
x-S_{n}x_{\alpha}\right\Vert _{T,p}+\left\Vert S_{n}x_{\alpha}-S_{k}x_{\alpha
}\right\Vert _{T,p}+\left\Vert S_{k}x_{\alpha}-S_{k}x\right\Vert _{T,p}\\
&  \leq\left\Vert x-x_{\alpha}\right\Vert _{T,p}+\left\Vert S_{n}x_{\alpha
}-S_{k}x_{\alpha}\right\Vert _{T,p}+\left\Vert x_{\alpha}-x\right\Vert
_{T,p}\\
&  =2\left\Vert x-x_{\alpha}\right\Vert _{T,p}+\left\Vert S_{n}x_{\alpha
}-S_{k}x_{\alpha}\right\Vert _{T,p}.
\end{align*}
Thus%
\[
\limsup\limits_{n,k}\left\Vert S_{n}x-S_{k}x\right\Vert _{T,p}\leq2\left\Vert
x-x_{\alpha}\right\Vert _{T,p}.
\]
As this happens for every $\alpha$ we deduce that
\[
\limsup\limits_{n,k}\left\Vert S_{n}x-S_{k}x\right\Vert _{T,p}=0,
\]
and then $\left(  S_{n}x\right)  $ converges in $L^{p}\left(  T\right)  ,$ as
required. It remains to prove the identity (\ref{E1}). First, note that if
$x\in L^{p}\left(  T\right)  $ and $y\in L^{q}\left(  T\right)  ,$ the
previous result ensures $L_{S}y\in L^{q}\left(  T\right)  $, and the
application of H\"{o}lder Inequality (see \cite[Theorem 3.7]{L-180}) confirms
that $xL_{S}y\in L^{1}\left(  T\right)  $. Furthermore, as $L_{S}$ is
conditional expectation and is thus an averaging operator (\cite{L-24}), we
have
\[
L_{S}\left(  xL_{S}y\right)  =L_{S}xL_{S}y.
\]
This concludes the proof of the theorem.
\end{proof}

The final result of the paper provides a characterization of ergodicity.

\begin{theorem}
Let $\left(  E,e,T,S\right)  $ be a conditional expectation preserving system
and $p,q\in\left[  1,\infty\right]  $ with $\dfrac{1}{p}+\dfrac{1}{q}=1.$ Then
the following statements are equivalent.

\begin{enumerate}
\item[(i)] The system $\left(  E,e,T,S\right)  $ is ergodic;

\item[(ii)] The sequence $\dfrac{1}{n}%
{\textstyle\sum\limits_{k=0}^{n-1}}
T\left(  fS^{k}g\right)  $ is order convergent to $TfTg$ for all $f\in
L^{p}\left(  T\right)  ,g\in L^{q}\left(  T\right)  .$
\end{enumerate}
\end{theorem}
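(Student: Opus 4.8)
The plan is to prove the equivalence by leveraging Theorem \ref{Y4-E}, which establishes the strong (and hence order) convergence of the Ces\`{a}ro means $S_n f$ to $L_S f$ in $L^p(T)$, together with the characterization of ergodicity as $L_S = T$ from \cite[Theorem 3.7]{L-33}.

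For the implication (i) $\Longrightarrow$ (ii), I would argue as follows. Fix $f \in L^p(T)$ and $g \in L^q(T)$. By Theorem \ref{Y4-E}, the Ces\`{a}ro means $S_n g = \frac{1}{n}\sum_{k=0}^{n-1} S^k g$ converge strongly in $L^q(T)$ to $L_S g$, and by ergodicity $L_S g = Tg$. Since $S^k$ is a Riesz homomorphism with $TS^k = T$, one has $T(f S^k g)$ well defined by H\"{o}lder's inequality (see \cite[Theorem 3.7]{L-180}), and by linearity $\frac{1}{n}\sum_{k=0}^{n-1} T(f S^k g) = T\!\left(f \cdot S_n g\right)$. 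It then remains to show that strong convergence $S_n g \to Tg$ in $L^q(T)$ forces $T(f \cdot S_n g) \to T(f \cdot Tg)$ in order. This follows from the estimate
\[
\left| T(f \cdot S_n g) - T(f \cdot Tg) \right| \leq T\!\left( |f| \cdot |S_n g - Tg| \right) \leq \left\| f \right\|_{T,p} \left\| S_n g - Tg \right\|_{T,q},
\]
again by H\"{o}lder, and $\left\| S_n g - Tg \right\|_{T,q} \overset{o}{\longrightarrow} 0$ by strong convergence, so the right-hand side goes to $0$ in order. Finally, since $T$ is an averaging operator, $T(f \cdot Tg) = Tf \cdot Tg$, which is exactly the claimed limit.

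For the converse (ii) $\Longrightarrow$ (i), I would specialize the hypothesis. Ergodicity means $L_S = T$, i.e. $L_S x = Tx$ for every $x \in L^1(T)$; by Lemma \ref{Y4-D} and strong continuity it suffices to check this on a strongly dense subspace, e.g. $L^\infty(T)$, or even just to show $L_S x = Tx$ for $x$ in the ideal generated by $e$. Given $x \in L^\infty(T) \subseteq L^p(T) \cap L^q(T)$, apply (ii) with $f = e$ and $g = x$: since $Te = e$, the hypothesis gives $\frac{1}{n}\sum_{k=0}^{n-1} T(S^k x) \overset{o}{\longrightarrow} Tx$. But $T(S^k x) = Tx$ for every $k$ because $TS = T$ implies $TS^k = T$, so the left side is identically $Tx$ and this particular instance is vacuous. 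Instead I would take $f$ and $g$ both ranging over $L^\infty(T)$ and combine (ii) with the already-known order convergence $S_n g \to L_S g$ (Theorem \ref{Y4-E}): passing to the limit in $\frac{1}{n}\sum_{k=0}^{n-1} T(f S^k g) = T(f \cdot S_n g)$, the order-continuity of $T$ and the dominated nature of the convergence (the $S^k g$ are uniformly order bounded, as in the proof of Theorem \ref{Y4-E}(a)) yield $T(f \cdot S_n g) \overset{o}{\longrightarrow} T(f \cdot L_S g) = Tf \cdot L_S g$ using that $L_S$ is averaging. Comparing with (ii), $Tf \cdot L_S g = Tf \cdot Tg$ for all $f, g \in L^\infty(T)$; choosing $f$ with $Tf$ a weak unit (e.g. $f = e$) and using strict positivity of $T$ gives $L_S g = Tg$ on $L^\infty(T)$, hence $L_S = T$ on all of $L^1(T)$ by strong density, so the system is ergodic.

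The main obstacle I anticipate is the converse direction, specifically justifying the passage to the limit $T(f \cdot S_n g) \to T(f \cdot L_S g)$: one needs order-boundedness of the sequence $(f \cdot S_n g)$ in $L^1(T)$ to invoke order continuity of $T$ on the order-convergent sequence $S_n g \to L_S g$, which for $f, g \in L^\infty(T)$ follows from the uniform bound $|S^k g| \leq \|g\|_\infty$ and convexity exactly as in part (a) of the proof of Theorem \ref{Y4-E}. Once this domination is in place and one reduces to the dense subspace $L^\infty(T)$ via Lemma \ref{Y4-D}, both implications reduce to routine applications of H\"{o}lder's inequality, the averaging identity for conditional expectations, and strict positivity of $T$.
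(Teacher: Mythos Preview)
Your forward implication (i) $\Longrightarrow$ (ii) is correct and coincides with the paper's argument: strong $L^{q}$-convergence of $S_{n}g$ to $L_{S}g=Tg$ from Theorem \ref{Y4-E}, followed by H\"{o}lder's inequality and the averaging property of $T$.

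For (ii) $\Longrightarrow$ (i) the paper does not argue directly; it simply invokes \cite[Theorem 12]{L-1100}. Your attempt at a self-contained proof is close but contains a genuine gap. From the order convergence $S_{n}g\to L_{S}g$ together with domination you correctly obtain $T(fS_{n}g)\overset{o}{\longrightarrow}T(fL_{S}g)$ for $f,g\in L^{\infty}(T)$. However, the next identity
\[
T(f\cdot L_{S}g)=Tf\cdot L_{S}g
\]
does \emph{not} follow from ``$L_{S}$ is averaging.'' The averaging property of $L_{S}$ yields $L_{S}(fL_{S}g)=L_{S}f\cdot L_{S}g$, a statement about $L_{S}$, not about $T$; and the averaging property of $T$ would allow you to pull out the factor $L_{S}g$ only if $L_{S}g\in R(T)$ --- which is precisely the ergodicity you are trying to establish. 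As written, the step is circular.

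The repair is short. Comparing the two limits gives only $T(f\cdot L_{S}g)=Tf\cdot Tg$. Since $Tg\in R(T)$, the averaging property of $T$ also gives $T(f\cdot Tg)=Tf\cdot Tg$, whence
\[
T\bigl(f\,(L_{S}g-Tg)\bigr)=0\qquad\text{for every }f\in L^{\infty}(T).
\]
Set $h=L_{S}g-Tg\in L^{\infty}(T)$ and take $f=P_{h^{+}}e-P_{h^{-}}e$; then $|f|\leq e$, so $f\in L^{\infty}(T)$, and in the $f$-algebra $E^{u}$ one has $fh=h^{+}+h^{-}=|h|$. Hence $T|h|=0$, and strict positivity of $T$ forces $h=0$. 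This gives $L_{S}g=Tg$ on $L^{\infty}(T)$, after which your density argument via Lemma \ref{Y4-D} finishes the proof.
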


\begin{proof}
(i) $\Longrightarrow$ (ii) Assume the system is ergodic and let $f\in
L^{p}\left(  T\right)  ,g\in L^{q}\left(  T\right)  .$ In view of
\cite[Theorem 10]{L-1100} and Theorem \ref{Y4-E} we have $\left\Vert
S_{n}g-Tg\right\Vert _{T,p}\overset{o}{\longrightarrow}0.$ Now using
H\"{o}lder Inequality we get%
\begin{align*}
\left\vert T\left(  fS_{n}g\right)  -TfTg\right\vert  &  =T\left\vert f\left(
S_{n}g-Tg\right)  \right\vert \leq T\left\vert f\left(  S_{n}g-Tg\right)
\right\vert \\
&  \leq\left\Vert f\right\Vert _{p}\left\Vert S_{n}g-Tg\right\Vert _{q}.
\end{align*}
Hence $\dfrac{1}{n}%
{\textstyle\sum\limits_{k=0}^{n-1}}
T\left(  fS^{k}g\right)  \overset{o}{\longrightarrow}TfTg$ as
$n\longrightarrow\infty.$

(ii) $\Longrightarrow$ (i) This follows immediately from \cite[Theorem
12]{L-1100}.
\end{proof}

\end{document}